\def\inserttitle{}
\def\insertshorttitle{}
\def\insertauthor{}
\def\insertshortauthor{}
\renewcommand{\title}[2][]{%
  \def\inserttitle{#2}%
  \def\@title{#2}%
  \ifx&#1&%
  \def\insertshorttitle{#2}%
  \else%
  \def\insertshorttitle{#1}%
  \fi%
  \sethf
}
\renewcommand{\author}[2][]{%
  \def\insertauthor{#2}%
  \def\@author{#2}%
  \ifx&#1&%
  \def\insertshortauthor{#2}%
  \else%
  \def\insertshortauthor{#1}%
  \fi%
  \sethf
}
\newcommand{\sethf}{%
  \markboth{\hfill\scshape\insertshortauthor\hfill}%
  {\hfill\scshape\insertshorttitle\hfill}
}
\renewenvironment{abstract}{\footnotesize\begin{quote}\textbf{Abstract.}~}%
  {\end{quote}}
\theoremstyle{plain}
  \newtheorem{theorem}             {Theorem}    [section]
  \newtheorem{lemma}      [theorem]{Lemma}
  \newtheorem{corollary}  [theorem]{Corollary}
\theoremstyle{definition}
\theoremstyle{remark}
  \newtheorem*{remark}             {Remark}
\renewcommand{\phi}{\varphi}
\renewcommand{\epsilon}{\varepsilon}
\newcommand{\eps}{\varepsilon}
\DeclareMathOperator{\arcsinh}{arcsinh}
\DeclareMathOperator{\sign}{sign}
\DeclareMathOperator{\trace}{trace}
\renewcommand{\div}{\mathrm{div}}
\newcommand{\bbR}{\mathbb{R}}
\newcommand{\calA}{\mathcal{A}}
\newcommand{\calC}{\mathcal{C}}
\newcommand{\calE}{\mathcal{E}}
\newcommand{\calG}{\mathcal{G}}
\newcommand{\calH}{\mathcal{H}}
\newcommand{\calL}{\mathcal{L}}
\newcommand{\calP}{\mathcal{P}}
\newcommand{\calT}{\mathcal{T}}
\newcommand{\calV}{\mathcal{V}}
\newcommand{\wto}{\rightharpoonup}
\newcommand{\wsto}{\stackrel{*}{\rightharpoonup}}
\newcommand{\loc}{\mathrm{loc}}
\newcommand{\sm}{\setminus}
\newcommand{\set}[2][\empty]{\ensuremath{%
    \left\{%
      \ifx\empty#1%
      \relax%
      \else%
      #1:%
      \fi%
      #2%
    \right\}%
  }%
}
\newcommand{\snabla}[2][\empty]{\ensuremath{%
    \ifx\empty#1%
    \nabla #2%
    \else%
    \nabla_{\!\!#1} #2%
    \fi%
  }%
}
\newcommand{\sdiv}[2][\empty]{\ensuremath{%
    \ifx\empty#1%
    \div #2%
    \else%
    \div_{\!#1} #2%
    \fi%
  }%
}
\newcommand{\slaplace}[2][\empty]{\ensuremath{%
    \ifx\empty#1%
    \Delta #2%
    \else%
    \Delta_{#1} #2%
    \fi%
  }%
}
\numberwithin{figure}{section}
\numberwithin{table}{section}
\numberwithin{equation}{section}
\title[An approximation for two-phase lipid bilayers]{%
  Convergence of an approximation for rotationally symmetric two-phase
  lipid bilayer membranes}
\author[M. Helmers]{%
  {\scshape Michael Helmers}\\%
  {\footnotesize Institute for Applied Mathematics, University of Bonn}\\[-.5ex]
  {\footnotesize Endenicher Allee 60, 53115 Bonn, Germany}\\[-.5ex]%
  {\footnotesize Email: {\tt helmers@iam.uni-bonn.de}}%
}
\date{}
\begin{document}

\maketitle
\thispagestyle{empty}

%
%

\begin{abstract}
  We consider a diffuse interface approximation for the lipid phases
  of rotationally symmetric two-phase bilayer membranes and rigorously
  derive its $\Gamma$-limit. In particular, we prove that limit
  vesicles are $C^1$ across interfaces, which justifies a regularity
  assumption that is widely made in formal asymptotic and numerical
  studies. Moreover, a limit membrane may consist of several
  topological spheres, which are connected at the axis of revolution
  and resemble complete buds of the vesicle.

  \emph{Keywords}:
  $\Gamma$-convergence,
  phase field model,
  lipid bilayer,
  two-phase membrane.

  \emph{AMS Subject Classification (2010)}:
  49J45, 
  82B26, 
  49Q10, 
  92C10. 
\end{abstract}


\section{Introduction}

Lipid bilayer membranes are an integral part of many biological
systems and display a rich variety of shapes and shape
transformations; in particular, membranes that consist of two or more
lipid phases have a complex morphology affected by the interplay of
elastic properties and phase separation
\cite{SeBeLi91,DoKaNoSpSa93,BaHeWe03}.
The spontaneous curvature model for two-phase lipid bilayer vesicles
describes equilibrium shapes as surfaces minimising the energy
\begin{equation}
  \label{eq:intro:energy}
  \sum_{j=\pm}
  \int_{M^j} k^j ( H-H_s^j )^2 + k_G^j K \,d\mu
  +
  \sigma \calH^1(\partial M^+)
\end{equation}
among all closed surfaces $M = M^+ \cup M^-$, $M^+ \cap M^- =
\emptyset$ with prescribed areas for $M^\pm$ \cite{Canham70, Evans74,
  Helfrich73, JuLi96, SeLi95}. Here $H$ and $K$ are the mean curvature
and the Gauss curvature of the membrane surface $M$, and $\mu$ is its
area measure. The bending rigidities $k^\pm > 0$ and the Gauss
rigidities $k_G^\pm$ are elastic material parameters, and $H_s^\pm$ --
the so-called spontaneous curvatures -- are supposed to reflect an
asymmetry in the membrane. In the simplest case, the rigidities and
spontaneous curvatures are constant within each lipid phase but
different between the two phases. The length of the phase interfaces
$\partial M^+ = \partial M^-$ is denoted by $\calH^1(\partial M^+)$,
and $\sigma$ is a line tension parameter.

In \cite{JuLi96} the Euler-Lagrange equations of
\eqref{eq:intro:energy} for axially symmetric two-phase membranes with
exactly one interface are studied. The authors mention the possibility
of different smoothness conditions at the interface, their analysis,
however, is done for smooth membranes, which are $C^1$ across the
interface, only. Phase field models for the lipid phases and also the
membrane surface are introduced in
\cite{ElSt10_Modeling,ElSt10,DuWa08,LoRaVo09} and studied numerically;
convergence to the sharp interface limit is obtained by asymptotic
expansion and under additional smoothness assumptions and topological
restrictions.

In this paper we are interested in the convergence of a diffuse
interface approximation for the lipid phases in a rotationally
symmetric setting without imposing smoothness or the topological
structure of limit vesicles in advance. More precisely, for a closed
surface $M_\gamma$ obtained by rotating a curve $\gamma$ and an
associated rotationally symmetric phase field $u: M_\gamma \to \bbR$
we consider the approximate energy
\begin{equation}
  \label{eq:intro:approx-energy}
  \int_{M_\gamma}
  k(u) \left( H-H_{s}(u) \right)^2 + k_G(u) K \,d\mu
  +
  \int_{M_\gamma}
  \eps |\snabla[M_\gamma]{u}|^2 + \frac{1}{\eps} W(u) \,d\mu.
\end{equation}
Here the second integral, where $W$ is a standard double well
potential such as $W(u)=(1-u^2)^2$, is the diffuse interface energy
from the Cahn-Hilliard theory of phase transitions \cite{CaHi58} in
the setting of surfaces. As $\eps \to 0$, the phase field is forced to
$\pm 1$, hence the first integral in \eqref{eq:intro:approx-energy}
resembles the curvature integral in \eqref{eq:intro:energy}, provided
that $k(u)$, $k_G(u)$ and $H_s(u)$ are extensions of the given
parameters $k^\pm$, $k_G^\pm$ and $H_s^\pm$.
We prove that, under certain restrictions on these parameters, the
$\Gamma$-limit of \eqref{eq:intro:approx-energy} is given by
\eqref{eq:intro:energy} for rotationally symmetric membranes. In
particular, we obtain that sequences $(\gamma_\eps,u_\eps)$ with
uniformly bounded energy have a subsequence that converges to a limit
membrane consisting of finitely many regular topological spheres,
which are connected at the axis of revolution. By our assumption on
the parameters and the approximation procedure, the limit model has
the property that membranes are $C^1$ across interfaces. For an
approach that allows tangent singularities at interfaces in the limit
see \cite{Helmers13}.

Equi-coercivity and $\Gamma$-convergence also yield the existence of a
minimiser for the limit model. Upon completion of this work, we
became aware of the preprint \cite{ChMoVe}, where the existence of
energy-minimal two-phase membranes in a setting similar to our limit
model is studied and similar issues as in our equi-coercivity and
lower bound arguments are addressed.

The paper is organised as follows. Section \ref{sec:surf} recalls some
facts about surfaces of revolution, in Section \ref{sec:theorem} we
present our setting and state the convergence theorem.  We prove the
theorem in Section \ref{sec:proof} and conclude with some remarks on
generalisations in Section \ref{sec:generalisations}.


\section{Surfaces of revolution}
\label{sec:surf}


\subsection{Basic definitions and notation}

Let $I \subset \bbR$ be an open bounded interval and $\gamma=(x,y)
\colon I \to \bbR^2$ a Lipschitz parametrised curve in the upper half
of the $x y$-plane, that is, $y(t) \geq 0$ for all $t \in I$. We
denote by $M_\gamma$ the surface in $\bbR^3$ obtained by rotating
$\gamma$ about the $x$-axis, thus $M_\gamma$ is the image of
$\overline{I} \times [0,2\pi)$ under the Lipschitz continuous map
\begin{equation*}
  \Phi \colon
  (t,\theta) \mapsto (x(t), y(t) \cos\theta, y(t) \sin\theta);
\end{equation*}
$\gamma$ is called generating curve of
$M_\gamma$. See~\cite{doCarmo76,Kuehnel99translated} for a detailed
discussion of surfaces.

Since $\gamma$ is Lipschitz continuous, it is weakly and almost
everywhere differentiable with bounded derivative $\gamma'$, and the
Fundamental Theorem of Calculus
\begin{equation*}
  \gamma(t_1) - \gamma(t_0)
  =
  \int_{t_0}^{t_1} \gamma'(t) \,d t
\end{equation*}
holds for all $t_0, t_1 \in \overline{I}$. The length of $\gamma$
is given by
\begin{equation*}
  \calL_{\gamma} =  \int_{I} |\gamma'(t)| \,d t,
\end{equation*}
and after removing at most countably many constancy intervals, pulling
holes together and reparametrising, we may assume that $\gamma$ is
parametrised with constant speed $|\gamma'| \equiv \calL_\gamma/|I| =:
q_\gamma$ almost everywhere in $I$~\cite[Lemma 5.23]{BuGiHi98}.

By $\mu$ we denote the area measure of $M_\gamma$, that is $d\mu =
|\partial_t \Phi \wedge \partial_\theta \Phi| \,d t \,d\theta =
|\gamma'| y \,d t \,d\theta$, and we write
\begin{equation*}
  \calA_\gamma
  =
  \int_{M_\gamma} \,d\mu
  =
  2\pi \int_{I} |\gamma'| y \,d t
\end{equation*}
for the area of $M_\gamma$. Moreover, for a measurable subset $J$ of
$I$, we let $M_\gamma(J)$ be the part of $M_\gamma$ that is obtained
by rotating the curve segment $\gamma(J)$, and refer to the
corresponding length and area as $\calL_\gamma(J)$ and
$\calA_\gamma(J)$, respectively. If $\gamma$ is embedded, then also
$M_\gamma$ is, and $\mu$ is the two-dimensional Hausdorff measure
$\calH^2$ restricted to $M_\gamma$; in general, however, the
multiplicity of $\mu$ may be larger than $1$.

The tangent space $\calT_{(t_0,\theta_0)} M_\gamma$ exists for almost
every $(t,\theta) \in I \times [0,2\pi)$ and is the plane spanned
by the orthonormal vectors
\begin{equation}
  \label{eq:surf:orthonormal-basis}
  \xi_1
  =
  \frac{\partial_t \Phi}{|\partial_t \Phi|}
  =
  \frac{1}{|\gamma'|} \left( x', y' \cos\theta, y' \sin\theta \right)
  \qquad\text{and}\qquad
  \xi_2
  =
  \frac{\partial_\theta \Phi}{|\partial_\theta \Phi|}
  =
  \left( 0, -\sin\theta, \cos\theta \right);
\end{equation}
a unit normal is given by
\begin{equation}
  \label{eq:surf:unit-normal}
  \nu
  =
  \frac{\partial_t \Phi \wedge \partial_\theta \Phi}
  {|\partial_t \Phi \wedge \partial_\theta \Phi|}
  =
  \frac{1}{|\gamma'|} \left( -y', x' \cos\theta, x' \sin\theta \right).
\end{equation}
We associate tangent space, normal and all other geometric quantities
to the parameter $(t,\theta)$ and not to the point $\Phi(t,\theta)$ on
the surface $M_\gamma$, because $M_\gamma$ is not necessarily
embedded. For the same reason, we consider a function $f \colon
M_\gamma \to \bbR^k$ to be a function $F \colon \overline{I} \times
[0,2\pi) \to \bbR^k$ of the parameters; on embedded parts of
$M_\gamma$ this amounts to $f(\Phi(t,\theta)) = F(t,\theta)$. Given a
tangent vector $\xi$ at $(t_0,\theta_0) \in I \times (0,2\pi)$, the
directional derivative of $f$ in direction $\xi$ is defined as
\begin{equation*}
  D_\xi f(t_0,\theta_0)
  = \left. \frac{d}{d s} F(\eta(s)) \right|_{s=0},
\end{equation*}
where $\eta \colon (-\delta,\delta) \to I \times [0,2\pi)$ is a
$C^1$-curve satisfying $\eta(0) = (t_0,\theta_0)$ and $ \frac{d}{d s}
\Phi(\eta(s)) \big|_{s=0} = \xi$. The tangential gradient of $f
\colon M_\gamma \to \bbR$ is
\begin{equation*}
  \snabla[M_\gamma]{f} = (D_{\xi_1} f) \xi_1 + (D_{\xi_2} f) \xi_2,
\end{equation*}
where $\set{\xi_1,\xi_2}$ is any orthonormal basis of the tangent
space; see \cite{Simon83, Kuehnel99translated} for a detailed
discussion. For $\set{\xi_1,\xi_2}$ as in
\eqref{eq:surf:orthonormal-basis}, we find $D_{\xi_1} f =
|\gamma'|^{-1} \partial_t F$ and $D_{\xi_2} f = y^{-1} \partial_\theta
F$, hence
\begin{equation*}
  \snabla[M_\gamma]{f}
  =
  \frac{1}{|\gamma'|} (\partial_t F) \xi_1
  +
  \frac{1}{y} (\partial_\theta F) \xi_2
  =
  \frac{1}{|\gamma'|^2} (\partial_t F) \partial_t \Phi
  +
  \frac{1}{y^2} (\partial_\theta F) \partial_\theta \Phi.
\end{equation*}
In particular, if $f$ is rotationally symmetric, which means that it
is independent of $\theta$, then
\begin{equation*}
  \snabla[M_\gamma]{f}(t,\theta)
  = \frac{F'(t)}{|\gamma'(t)|} \xi_1(t,\theta)
  \qquad \text{and} \qquad
  |\snabla[M_\gamma]{f}(t,\theta)| = \frac{|F'(t)|}{|\gamma'(t)|},
\end{equation*}
where $| \cdot |$ is the Euclidean norm in $\bbR^3$.


For the rest of this subsection let $\gamma \in W^{2,1}_{\loc}(I;
\bbR^2)$ be twice weakly differentiable, thus twice differentiable
almost everywhere, and $y>0$ in $I$. Since $\nu$ in
\eqref{eq:surf:unit-normal} is weakly differentiable, the shape
operator $S \colon \calT_{(t_0,\theta_0)}M \to
\calT_{(t_0,\theta_0)}M$, $\zeta \mapsto D_\zeta \nu$ and the second
fundamental form $B \colon \calT_{(t_0,\theta_0)}M \times
\calT_{(t_0,\theta_0)}M \to \bbR$, $(\zeta,\xi) \mapsto \xi \cdot
D_\zeta \nu$ are well-defined for almost every $(t_0,\theta_0)$. The
matrix representation with respect to the basis $\set{\xi_1,\xi_2}$ in
\eqref{eq:surf:orthonormal-basis} of both is
\begin{equation*}
  \begin{pmatrix}
    \kappa_1 & 0 \\
    0        & \kappa_2
  \end{pmatrix}
  \qquad\text{with}\qquad
  \kappa_1
  =
  \frac{-y'' x' + y' x''}{|\gamma'|^3}
  =
  - \frac{\gamma'' \cdot \gamma'^{\perp}}{|\gamma'|^3}
  \quad\text{and}\quad
  \kappa_2
  =
  \frac{x'}{y|\gamma'|}.
\end{equation*}
The eigenvalues $\kappa_1, \kappa_2$ of $S$ are the principal
curvatures of $M_\gamma$, and $\kappa_1$ is just the signed curvature
of $\gamma$ with respect to the normal $-\gamma'^{\perp}/|\gamma'| =
(y',-x')/|\gamma'|$. The mean curvature $H$ and the Gauss curvature
$K$ of $M_\gamma$ are
\begin{equation*}
  H = \trace S = \kappa_1 + \kappa_2
  \qquad\text{and}\qquad
  K = \det S = \kappa_1 \kappa_2.
\end{equation*}
By $|S|^2 = \kappa_1^2 + \kappa_2^2$ we denote the squared Frobenius
norm of $S$, and since $B(\zeta,\zeta) = \zeta \cdot S \zeta$, we also
write $|B|^2 = |S|^2$. Obviously, we have $|B|^2 = H^2 - 2K$.

The signs of the principal curvatures and the mean curvature depend on
the sign of the normal $\nu$. Our choice above ensures that a unit
ball has outer unit normal $\nu$ as in \eqref{eq:surf:unit-normal} and
curvatures $\kappa_1=\kappa_2=+1$ when its generating curve is
parametrised ``from left to right'' such that $x' \geq 0$, for
instance by $\gamma(t) = ( -\cos t, \sin t)$, $t \in [0,\pi]$.

Let $\phi \colon I \to \bbR$ be an angle function for $\gamma$, that
is, let $\phi(t)$ be the angle between the positive $x$-axis and the
tangent vector $\gamma'(t)$. Since $W^{2,1}_{\loc}$ embeds into
$C^1_{\loc}$, the angle $\phi$ can be chosen continuously in $I$ and
is then uniquely determined up to multiples of $2\pi$. In terms of
$\phi$, the curve $\gamma$ is characterised by fixing one point and
\begin{equation*}
  x' = |\gamma'| \cos \phi,
  \qquad
  y' = |\gamma'| \sin \phi.
\end{equation*}
The principal curvatures take the form
\begin{equation*}
  \kappa_1 = - \frac{\phi'}{|\gamma'|},
  \qquad
  \kappa_2 = \frac{\cos \phi}{y},
\end{equation*}
and we have
\begin{equation}
  \label{eq:surf:gauss-curv-angle}
  K
  =
  - \frac{\phi' \cos \phi}{|\gamma'| y}
  =
  - \frac{\left(\sin \phi \right)'}{|\gamma'| y}
  =
  - \frac{\left( y' / |\gamma'| \right)'}{|\gamma'|y}.
\end{equation}
From \eqref{eq:surf:gauss-curv-angle} we see that for any $J = (a,b)
\Subset I$ the integral
\begin{equation}
  \label{eq:surf:gauss-bonnet}
  \int_{M_\gamma(J)} K d\mu
  =
  - 2\pi \int_a^b \left(\sin \phi \right)' \,d t
  =
  2\pi \left( \sin \phi(a) - \sin \phi(b) \right).
\end{equation}
depends only on the tangent angle at $\partial J$. If additionally
$\phi \in C^0(\overline{I})$, then \eqref{eq:surf:gauss-bonnet} is by
approximation also true for $J = I$, which is just the Gauss Bonnet
Theorem for surfaces of revolution. In particular, if $y(\partial I) =
\set{0}$ and $M_\gamma$ is a $C^1$-surface, then $\gamma'$ is
perpendicular to the axis of revolution at $\partial I$ and we
conclude $\int_{M_\gamma} K \,d\mu = 4\pi$.

Another consequence of~\eqref{eq:surf:gauss-curv-angle} is that for
$\gamma$ parametrised with constant speed $q_\gamma>0$ the integral
\begin{equation}
  \label{eq:surf:gauss-and-gamma}
  \int_{M_\gamma(J)} |K| \,d\mu
  =
  \frac{2\pi}{q_\gamma} \int_J |y''| \,d t
\end{equation}
is the $L^1$-norm of $y''$. Moreover, in that case we also have
$|\gamma''|^2 = \phi'^2 q_\gamma^2$ and obtain that
\begin{equation}
  \label{eq:surf:kappa1-and-gamma}
  \int_{M_\gamma(J)} \kappa_1^2 \,d\mu
  =
  \frac{2\pi}{q_\gamma} \int_J |\phi'|^2 y \,d t
  =
  \frac{2\pi}{q_\gamma^3} \int_J |\gamma''|^2 y \,d t
\end{equation}
is a weighted $L^2$-norm of $\phi'$ and $\gamma''$.

If $M_\gamma$ is a closed surface, that is $y(\partial I) = \set{0}$,
$\kappa_2$ seemingly degenerates at the axis of revolution. However,
if $M_\gamma$ is sufficiently smooth, the principal curvatures are
still well-defined, for instance by taking another local
parametrisation of $M_\gamma$; to compute $\kappa_2$ in the
rotationally symmetric parametrisation, L'H{\^o}pital's rule may be
used and yields $\kappa_2 = \kappa_1$ \cite{Kuehnel99translated}.


\subsection{Surfaces with
  \texorpdfstring{$L^2$}{L\unichar{"00B2}}-bounded second
  fundamental form}

The sharp inequality $y>0$ in $I$ is not conserved by the convergence
of curves that our $\eps$-energy yields. If merely $y \geq 0$ in $I$,
the set $\set{y>0} = \set[t \in I]{y(t)>0}$ is open and hence is the
union of its countably many connected components, which are disjoint
open intervals. In a slight abuse of language we also call
$M_\gamma(\omega)$ a component of $M_\gamma$ if $\omega$ is a
component of $\set{y>0}$. Thus, $M_\gamma$ consists of at most
countably many components, which are connected at the axis of
revolution.

In the following lemma and corollary we collect some regularity
properties of $\gamma$ and $M_\gamma$ that follow from an $L^2$-bound
on the second fundamental form. The focus here is on regions of
$\set{y>0}$ near the axis of revolution.

\begin{lemma}
  \label{lem:surf:sec-fform-bound}
  Let $\gamma = (x,y) \colon I \to \bbR^2$, $y \geq 0$ be a Lipschitz
  curve that satisfies $\gamma \in W^{2,1}_{\loc}(\set{y>0}; \bbR^2)$,
  $|\gamma'| \equiv q_\gamma >0$ in $\set{y>0}$, and
  \begin{equation*}
    \int_{M_\gamma(\set{y>0})} |B|^2 \,d\mu < \infty.
  \end{equation*}
  Then we have $\gamma \in W^{2,2}_{\loc}(\set{y>0}; \bbR^2)$ and $y
  \in W^{2,1}(\set{y>0}; \bbR^2)$.
  Moreover, for any connected component $\omega = (a,b)$ of
  $\set{y>0}$ the curve $\gamma$ belongs to $C^1(\overline\omega;
  \bbR^2)$ and has one-sided derivatives $\gamma'(a) = - \gamma'(b) =
  (0,|\gamma'|)$, which means that $\gamma$ is perpendicular to the
  axis of revolution.
  The number of components of $\set{y>0}$ is finite.
\end{lemma}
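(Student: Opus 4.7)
The plan is to turn the $L^2$-bound on $|B|^2$ into concrete integrability statements via the formulas \eqref{eq:surf:gauss-and-gamma} and \eqref{eq:surf:kappa1-and-gamma}, and then to extract regularity, boundary behaviour, and a finite component count one at a time.

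First I would handle the two Sobolev regularity claims. Since $|B|^2=\kappa_1^2+\kappa_2^2$ and $|K|\le\tfrac12|B|^2$, the hypothesis gives both $\int_{M_\gamma(\set{y>0})}\kappa_1^2\,d\mu<\infty$ and $\int_{M_\gamma(\set{y>0})}|K|\,d\mu<\infty$. Inserting the second into \eqref{eq:surf:gauss-and-gamma} (applied on $\set{y>0}$ via exhaustion by compact subsets) yields $\int_{\set{y>0}}|y''|\,dt<\infty$, so $y\in W^{2,1}(\set{y>0})$. Inserting the first into \eqref{eq:surf:kappa1-and-gamma} yields $\int_{\set{y>0}}|\gamma''|^2 y\,dt<\infty$; on any $J\Subset\set{y>0}$ the continuous function $y$ is bounded below by a positive constant, so $\int_J|\gamma''|^2\,dt<\infty$ and $\gamma\in W^{2,2}_{\loc}(\set{y>0};\bbR^2)$.

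Next, fix a connected component $\omega=(a,b)$ of $\set{y>0}$. From $y''\in L^1(\omega)$ the derivative $y'$ is absolutely continuous on $\omega$ and extends continuously to $\overline\omega$; since $y>0$ on $\omega$ and $y(a)=0$, necessarily $y'(a)\ge 0$. Let $\phi\in C^0_{\loc}(\omega)$ be the tangent angle provided by $\gamma\in W^{2,2}_{\loc}(\omega)\hookrightarrow C^1_{\loc}(\omega)$; the identity $y'=q_\gamma\sin\phi$ then gives $\sin\phi(t)\to\alpha:=y'(a)/q_\gamma$ as $t\to a^+$. The other principal curvature is $\kappa_2=\cos\phi/y$, so integrating out $\theta$ in $\int\kappa_2^2\,d\mu<\infty$ produces $\int_\omega\cos^2\phi/y\,dt<\infty$. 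I would now rule out $\alpha<1$ by a dichotomy: if $\alpha\in(0,1)$, then $\cos^2\phi$ stays bounded below near $a$ while $y(t)\le C(t-a)$, forcing a non-integrable $1/(t-a)$ singularity in $\cos^2\phi/y$; if $\alpha=0$, then $\cos^2\phi\to 1$ while $y(t)=o(t-a)$, giving an even worse singularity. Hence $\alpha=1$, so $\sin\phi(t)\to 1$, and by continuity of $\phi$ we get $\phi(t)\to\pi/2$ modulo $2\pi$, which forces $\cos\phi(t)\to 0$ and $\gamma'(t)\to(0,q_\gamma)$. The symmetric analysis at $b$ gives $\gamma'(t)\to(0,-q_\gamma)$.

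For the finiteness of components: with $\phi\in C^0(\overline\omega)$, $\sin\phi(a)=1$, and $\sin\phi(b)=-1$ just established, approximating $\omega$ by $(a+\delta,b-\delta)$ and passing to the limit in \eqref{eq:surf:gauss-bonnet} yields $\int_{M_\gamma(\omega)}K\,d\mu=2\pi(\sin\phi(a)-\sin\phi(b))=4\pi$. Hence each component contributes at least $4\pi$ to $\int_{M_\gamma(\set{y>0})}|K|\,d\mu\le\tfrac12\int|B|^2\,d\mu<\infty$, which bounds the number of components. The main obstacle I expect lies in the middle step: the axis-perpendicularity requires \emph{simultaneous} use of $y\in W^{2,1}$ (to identify $y'(a^+)$ as an honest limit) and the sharp integrability of $\kappa_2^2=\cos^2\phi/y^2$ (to force that limit to be $\pm q_\gamma$), and the case split on $\alpha$ must be handled with some care about the precise decay of $y$ near the axis.
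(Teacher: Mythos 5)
Your proof is correct, and it reaches the perpendicularity conclusion by a route that is cleaner than the paper's, although it rests on the same basic mechanism (the integrability of $\kappa_2^2$). The paper first establishes that $x'$ has a one-sided limit at the axis by an ad hoc contradiction argument with two sequences $t_k,s_k\to a$ and an intermediate-value step producing zeros of $x'$, and only then uses $\int\kappa_2^2\,d\mu<\infty$ in a second step to show that this limit must be $0$. You short-circuit this: from $y\in C^1(\overline\omega)$ you know $\sin\phi(t)=y'(t)/q_\gamma$ has a limit $\alpha\in[0,1]$, hence $\cos^2\phi(t)=x'(t)^2/q_\gamma^2\to 1-\alpha^2$ \emph{exists} without any extra work, and the integrability bound then forces $\alpha=1$, i.e.\ $x'(t)^2\to0$, which gives continuity of $x'$ at $a$ and $x'(a)=0$ in one stroke. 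In fact your case split on $\alpha=0$ versus $\alpha\in(0,1)$ is superfluous: the Lipschitz bound $y(t)\le q_\gamma(t-a)$ already yields the non-integrable $1/(t-a)$ singularity whenever $1-\alpha^2>0$, so a single estimate handles all $\alpha\in[0,1)$. The remaining parts (the Sobolev regularity via \eqref{eq:surf:gauss-and-gamma} and \eqref{eq:surf:kappa1-and-gamma}, and the component count via the Gauss--Bonnet identity $\int_{M_\gamma(\omega)}K\,d\mu=4\pi$ together with $2|K|\le|B|^2$) coincide with the paper's argument.
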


\begin{proof}
  On any set $J \Subset \set{y>0}$ the $y$-coordinate has a positive
  lower bound $c_J$ in $J$, thus $\gamma \in W_\loc^{2,2}(\{y>0\};
  \bbR^2)$ follows from \eqref{eq:surf:kappa1-and-gamma}; using $2|K|
  \leq |B|^2$ and \eqref{eq:surf:gauss-and-gamma} we obtain $y \in
  W^{2,1}(\set{y>0})$. The Sobolev embedding theorem then yields $x\in
  C^1_{\loc}(\omega)$ and $y \in C^1(\overline \omega)$ for any
  connected component $\omega=(a,b)$ of $\set{y>0}$, and we aim to
  show that also $x \in C^1(\overline \omega)$.

  Assume for contradiction that there are sequences $t_k \to a$, $s_k
  \to a$ in $\omega$ such that $\lim x'(t_k) \not= \lim x'(s_k)$; if
  such sequences cannot be found, $x'(t)$ converges as $t\searrow a$.
  Since $x'(t_k)^2$ and $x'(s_k)^2$ converge to $q_\gamma^2-y'(a)^2$,
  we have $\lim x'(s_k) = - \lim x'(t_k) = m \not= 0$ and $x'(t_k) <
  -m/2$ and $x'(s_k) > m/2$ for sufficiently large $k$. Thus, there is
  $r_k \in (t_k,s_k)$ or $(s_k,t_k)$ such that $x'(r_k)=0$, and from
  $r_k \to a$ we infer that $y'^2(a) = q_\gamma^2$. Consequently, we
  find $x'^2(s_k) = q_\gamma^2-y'^2(s_k) \to 0$ and $x'^2(t_k) =
  q_\gamma^2-y'^2(t_k) \to 0$ in contradiction to our
  assumption. Since the same argument applies at $t=b$, we obtain $x'
  \in C^1(\overline \omega)$.

  Next, to prove that $\gamma$ is perpendicular to the axis of
  revolution at $a$, we use $y(t) \leq q_\gamma (t-a)$ in $\omega$ and
  the second principle curvature of $M_\gamma$ to deduce that
  \begin{equation*}
    \infty
    >
    \frac{q_\gamma ^2}{2\pi} \int_{M(\omega)} \kappa_2^2 \,d\mu
    \geq
    \int_a^{a+\delta} \frac{x'^2}{t-a} \,d t
    \geq
    \left( \inf_{(a,a+\delta)} x'^2 \right) \int_a^{a+\delta} \frac{d t}{t-a}
  \end{equation*}
  for all $\delta \in (0,b-a)$. Continuity of $x'$ now implies
  $x'(a) = 0$, and similarly we get $x'(b)=0$. As $|\gamma'| =
  q_\gamma$ and $y>0$ in $\omega$, we find $y'(a)=-y'(b) =
  q_\gamma$.

  Finally, by the Gauss-Bonnet formula \eqref{eq:surf:gauss-bonnet} we
  have
  \begin{equation*}
    \int_{M(\omega)} K \,d\mu
    =
    4\pi
  \end{equation*}
  for each component $\omega$ of $\set{y>0}$, so the number $N_\gamma$
  of components of $\set{y>0}$ satisfies
  \begin{equation*}
    N_\gamma
    \leq
    \frac{1}{4\pi} \sum_{\omega} \int_{M_\gamma(\omega)} |K| \,d\mu
    \leq
    \frac{1}{8\pi} \int_{M_\gamma(\set{y>0})} |B|^2 \,d\mu
  \end{equation*}
  and is thus finite.
\end{proof}

\begin{corollary}
  Let $\gamma=(x,y)$ be as in Lemma \ref{lem:surf:sec-fform-bound}.
  Then $M_\gamma$ has finitely many components which are connected at
  the axis of revolution. Each component is an immersed $C^1$-surface
  and a $W^{2,2}$-surface in $\set{y>0}$, that is, away from the axis
  of revolution.
\end{corollary}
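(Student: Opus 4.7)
The plan is to read off each conclusion of the corollary from Lemma~\ref{lem:surf:sec-fform-bound} applied component by component, with the only nontrivial point being the passage from $C^1$-regularity of $\gamma$ to $C^1$-regularity of $M_\gamma$ at the axis of revolution.

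First I would fix a connected component $\omega=(a,b)$ of $\set{y>0}$. On $\omega$ itself the lemma gives $\gamma\in W^{2,2}_{\loc}(\omega;\bbR^2)$, so the rotational parametrisation $\Phi$ is $W^{2,2}_{\loc}$ on $\omega\times[0,2\pi)$. Since $|\partial_t\Phi\wedge\partial_\theta\Phi|=|\gamma'|y=q_\gamma y>0$ there, $\Phi$ is an immersion and $M_\gamma(\omega)\cap\set{y>0}$ is an immersed $W^{2,2}$-surface; by Sobolev embedding it is in particular an immersed $C^1$-surface away from the axis.

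The main point is to extend $C^1$-regularity to the axis points $\Phi(a,\cdot)$ and $\Phi(b,\cdot)$. By the lemma, $\gamma\in C^1(\overline\omega)$ with $\gamma'(a)=(0,q_\gamma)$, so $y'(a)=q_\gamma>0$ and, shrinking $\omega$ to a right neighbourhood of $a$, $y$ is a $C^1$-diffeomorphism of $[a,a+\delta]$ onto $[0,y(a+\delta)]$. Writing $t=t(y)$ for its inverse and setting $X(y)=x(t(y))$, we have $X\in C^1([0,y(a+\delta)])$ with $X'(0)=x'(a)/y'(a)=0$. A small neighbourhood of the axis point $(x(a),0,0)$ in $M_\gamma(\omega)$ is then the graph
\begin{equation*}
  (y_2,y_3)\mapsto\bigl(X(\sqrt{y_2^2+y_3^2}),y_2,y_3\bigr)
\end{equation*}
over the disk $\{y_2^2+y_3^2<y(a+\delta)^2\}$. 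Since $X\in C^1$ with $X'(0)=0$, the chain rule shows that $(y_2,y_3)\mapsto X(\sqrt{y_2^2+y_3^2})$ has continuous partial derivatives tending to $0$ at the origin, hence the graph is a $C^1$-surface. The same argument applies at $b$, and for the other component meeting this axis point (if any), so the union of components through a given axis point is an immersed $C^1$-surface at that point.

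Finally, the number of components of $\set{y>0}$ is finite by the lemma, and by our convention components of $M_\gamma$ are in one-to-one correspondence with them; two distinct components intersect only at points on the axis $\set{y=0}$, which is exactly the statement that they are connected at the axis of revolution. The hard part is genuinely only the change of local parametrisation at the poles; once that is in place, every remaining claim is either immediate from the lemma or from the interior analysis of~$\Phi$.
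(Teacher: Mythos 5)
Your argument is correct, and it matches the paper's reasoning: the paper states this corollary without proof as an immediate consequence of Lemma \ref{lem:surf:sec-fform-bound}, and your write-up simply supplies the standard details. In particular, the only genuinely non-immediate point — that $\gamma \in C^1(\overline\omega)$ meeting the axis perpendicularly makes the pole a $C^1$ point of the surface — is handled correctly by your graph representation $(y_2,y_3)\mapsto X(\sqrt{y_2^2+y_3^2})$ with $X'(0)=0$, which is exactly the change of local parametrisation the paper alludes to at the end of Section \ref{sec:surf}.
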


\begin{remark}
  The properties $y \in W^{2,1}(\set{y>0})$ and $x \in
  C^1(\set{y>0})$, but $x'\not\in W^{2,1}(\set{y>0})$ in
  Lemma~\ref{lem:surf:sec-fform-bound} are sharp, as the following
  example shows. Let
  \begin{equation*}
    \psi(t)
    =
    \frac{ \sin \ln (1/t) +1 }{ \ln (1/t) }
  \end{equation*}
  for $t \in (0,t_0)$ with $t_0$ sufficiently small that $\psi(t) \in
  [0,1]$ for all $t \in (0,t_0)$ and consider
  \begin{equation*}
    x'(t)
    =
    \cos \left( \pi/2 - \psi(t) \right)
    =
    \sin \psi(t),
    \qquad
    y'(t)
    =
    \sin \left( \pi/2 - \psi(t) \right)
    =
    \cos \psi(t)
  \end{equation*}
  with $x(0)=y(0)=0$. As $t \to 0$, $\psi(t)$ converges to $0$ and we
  have $x'(t) \sim \psi(t)$, $y'(t) \sim 1$, and $y(t) \sim t$ for all
  small $t$, where $a \lesssim b$ denotes $a \leq Cb$ with a constant
  $C>0$ and $a \sim b$ means $a \lesssim b \lesssim a$. Thus we obtain 
  \begin{equation*}
    \int \kappa_2^2 \,d\mu
    \sim
    \int \frac{\psi^2}{t} \,d t
    \lesssim
    \int \frac{ d t }{ t (\ln(1/t))^2 }
    <
    \infty.
  \end{equation*}
  Moreover, the derivative of $\psi$ is
  \begin{equation*}
    \psi'(t)
    =
    - \frac{ \cos \ln(1/t) }{ t \ln(1/t) }
    + \frac{ \sin \ln(1/t) + 1 }{ t (\ln(1/t))^2 },
  \end{equation*}
  which implies
  \begin{equation*}
    \int \kappa_1^2 \,d\mu
    \sim
    \int \psi'^2 t \,d t
    \lesssim
    \int \frac{ d t }{ t (\ln(1/t))^2 }
    <
    \infty.
  \end{equation*}
  On the other hand, we have $x'' = \psi' \cos \psi \sim \psi'$ for
  small $t$ and
  \begin{equation*}
    \int |\psi'| \,d t
    \sim
    \int \frac{ |\cos \ln(1/t)| }{ t \ln(1/t)} \,d t
    =
    \infty,
  \end{equation*}
  thus $x \not\in W^{2,1}((0,t_0))$. Furthermore, $y'' = -\psi' \sin \psi
  \sim -\psi' \psi$ and
  \begin{equation*}
    \int |\psi' \psi|^p d t
    \sim
    \int \frac{ d t }{ t^p (\ln(1/t))^{2p} }
    =
    \infty
  \end{equation*}
  for any $p>1$ yield $y'' \not\in L^p((0,t_0))$. Note that $M_\gamma$
  is embedded due to $x' \geq 0$.
\end{remark}


\subsection{Length bound}

To establish compactness of energy bounded sequences, we need bounds
on the curves that are derived from bounds on the curvature integrals
in the energy. Two such results, which are well-known and valid for
arbitrary smoothly immersed surfaces, are \cite[Lemma 1.1]{Simon83}
and \cite{Topping08}, which relate the extrinsic and intrinsic
diameter of a surface to its mean curvature. The proof of both
results hinges on the fact that in an arbitrary ball the mean
curvature and the area cannot be small at the same time; the diameter
bounds are then obtained by a covering argument.
For closed surfaces of revolution, however, there is a straightforward
proof that the mean curvature integral bounds the length of the
generating curve.

\begin{lemma}
  \label{lem:surf:length-bound}
  Let $\gamma = (x,y) \in C^{0,1}(I;\bbR^2) \cap
  W^{2,1}_{\loc}(I;\bbR^2)$ be a curve such that $y(I) \subset
  (0,\infty)$ and $y(\partial I) = \set{0}$. Then
  \begin{equation*}
    \int_{M_\gamma} |H| d\mu
    \geq
    2\pi \calL_\gamma.
  \end{equation*}
\end{lemma}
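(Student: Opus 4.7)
The plan is to reduce the inequality to the almost-everywhere identity
\begin{equation*}
  (y\sin\phi)' + \cos\phi\cdot yH|\gamma'| = |\gamma'|
  \qquad\text{on } I.
\end{equation*}
Since $\int_{M_\gamma}|H|\,d\mu = 2\pi\int_I y|H||\gamma'|\,dt$ by the area formula, it suffices to show $\int_I y|H||\gamma'|\,dt \geq \calL_\gamma$, and I may assume $\int_{M_\gamma}|H|\,d\mu < \infty$ (otherwise the claim is trivial). After reparametrising with constant speed so that a continuous branch of the tangent angle $\phi$ is available on $I$, the identity is verified by direct computation: using $H = -\phi'/|\gamma'| + \cos\phi/y$ and $y' = |\gamma'|\sin\phi$,
\begin{equation*}
  (y\sin\phi)' = |\gamma'|\sin^2\phi + y\phi'\cos\phi,
  \qquad
  \cos\phi\cdot yH|\gamma'| = |\gamma'|\cos^2\phi - y\phi'\cos\phi,
\end{equation*}
and adding cancels the $y\phi'\cos\phi$ terms and leaves $|\gamma'|$.

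Integrating the identity over $I=(a,b)$ and using $y(a)=y(b)=0$ together with the boundedness of $\sin\phi$ to kill the boundary term $[y\sin\phi]_a^b$, I then obtain
\begin{equation*}
  \int_I \cos\phi\cdot yH|\gamma'|\,dt = \int_I |\gamma'|\,dt = \calL_\gamma.
\end{equation*}
Taking absolute values and bounding $|\cos\phi|\leq 1$ (with $y\geq 0$) gives
\begin{equation*}
  \calL_\gamma
  = \left|\int_I \cos\phi\cdot yH|\gamma'|\,dt\right|
  \leq \int_I y|H||\gamma'|\,dt
  = \frac{1}{2\pi}\int_{M_\gamma}|H|\,d\mu,
\end{equation*}
as desired.

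I do not expect any real obstacle. The continuous branch of $\phi$ exists because $\gamma\in W^{2,1}_{\loc}(I;\bbR^2)$ forces $\gamma'\in C^0_{\loc}(I;\bbR^2)$; the integrability of $\cos\phi\cdot yH|\gamma'|$ needed for the integration by parts follows from $\int_{M_\gamma}|H|\,d\mu<\infty$, so by the identity $(y\sin\phi)'\in L^1(I)$ and hence $y\sin\phi$ extends absolutely continuously to $\overline I$; and the limiting boundary term vanishes thanks to $|y\sin\phi|\leq |y|\to 0$ at $\partial I$. The only genuinely nontrivial step is spotting the identity — once one notices how the two summands of $yH|\gamma'|$ pair with those of $(y\sin\phi)'$ so as to eliminate $y\phi'$, the rest is essentially bookkeeping.
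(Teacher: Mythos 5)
Your proof is correct, but it takes a genuinely different route from the paper. The paper first treats the ``generalised graph'' case $x'\geq 0$, where an angle $\phi\in[-\pi/2,\pi/2]$ exists: it integrates $\int Hy\,dt$ by parts to trade $-\phi' y$ for $\phi y'$ and then invokes the pointwise inequality $\phi\sin\phi+\cos\phi\geq 1$; the general case is reduced to this one by passing to the unfolded curve $\widetilde\gamma=(\widetilde x,y)$ with $\widetilde x'=|x'|$, for which $\widetilde H=H\sign x'$, so that $\int|H|\,d\mu\geq\int\widetilde H\,d\widetilde\mu$. You instead test $H$ against $\cos\phi=x'/|\gamma'|$, which yields the exact identity $(y\sin\phi)'+\cos\phi\,yH|\gamma'|=|\gamma'|$ valid for an arbitrary continuous branch of $\phi$; integrating, using $L^1$-integrability of $\cos\phi\,yH|\gamma'|$ (from finiteness of $\int|H|\,d\mu$) to see that $y\sin\phi$ is absolutely continuous up to $\partial I$, and killing the boundary term via $|y\sin\phi|\leq y\to 0$, you get $\calL_\gamma=\int_I\cos\phi\,yH|\gamma'|\,dt\leq\frac{1}{2\pi}\int_{M_\gamma}|H|\,d\mu$. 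The payoff of your version is uniformity and brevity: no case split, no restriction $|\phi|\leq\pi/2$, no auxiliary curve, and the boundary term is controlled without needing $\phi$ bounded. What the paper's version buys in exchange is the sharper information in the monotone case that the \emph{signed} integral $\int H\,d\mu$ already dominates $2\pi\calL_\gamma$, and the equality analysis of $\phi\sin\phi+\cos\phi=1$ that feeds the strictness remark following the lemma. Both arguments rely on the same harmless normalisation (constant-speed/arc-length parametrisation, so that a continuous tangent angle exists), and your handling of the technical points (weak product rule via $y\in W^{2,1}_{\loc}$, $W(1,1)$-regularity of $y\sin\phi$, and the vanishing boundary term) is sound.
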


\begin{proof}
  We may assume that the mean curvature integral is finite because
  otherwise there is nothing to prove. Without loss of generality we
  also assume that $\gamma \colon (0,\calL_\gamma) \to \bbR^2$ is
  parametrised by arc length. If $x' \geq 0$ in $I$, there is an
  angle $\phi$ that is weakly differentiable in $I$ and satisfies
  $\phi \in [-\pi/2,\pi/2]$. Then we obtain
  \begin{align*}
    \int_{M_\gamma} H \,d\mu
    &=
    2\pi \int_{0}^{\calL_\gamma}
    \left(-\phi' + \frac{\cos \phi}{y} \right) y \,d t
    =
    2\pi \int_{0}^{\calL_\gamma}
    \phi y' + \cos \phi \,d t - \left. 2\pi \phi y \right|_{0}^{\calL_\gamma}
    \\
    &=
    2\pi \int_{0}^{\calL_\gamma} \phi \sin \phi + \cos \phi \,d t
    \\
    &\geq
    2\pi \calL_\gamma,
  \end{align*}
  because $\phi \sin \phi + \cos \phi \geq 1$.
  In general, when $x' \geq 0$ does not hold, we consider the curve
  $\widetilde\gamma = (\widetilde x,\widetilde y)$ defined by
  \begin{equation*}
    \widetilde y = y
    \qquad\text{and}\qquad
    \widetilde x(t) = x(0) + \int_0^t |x'(s)| \,d s.
  \end{equation*}
  We clearly have $|\widetilde \gamma'| = |\gamma'|=1$, and a simple
  calculation shows $\widetilde H = H \sign x'$ almost
  everywhere. Therefore, we conclude
  \begin{equation*}
    \int_{M_\gamma} |H| \,d\mu
    \geq
    \int_{M_{\widetilde\gamma}} \widetilde H \,d\widetilde\mu
    \geq
    2\pi \calL_{\widetilde \gamma}
    =
    2\pi \calL_\gamma.
    \qedhere
  \end{equation*}
\end{proof}

\begin{remark}
  The inequality in Lemma~\ref{lem:surf:length-bound} is actually
  strict: equality in the above calculation means $\phi \sin \phi +
  \cos \phi = 1$, which holds only if $\phi \equiv 0$ and is thus
  impossible for a nontrivial closed surface of revolution. Moreover,
  the inequality is sharp, as can be seen by a cylinder with spherical
  caps when the radius tends to zero.
\end{remark}


\section{Energies and \texorpdfstring{$\Gamma$}{\textGamma}-convergence}
\label{sec:theorem}


\subsection{Approximate setting}

Recall from the introduction that we aim to approximate the energy
\eqref{eq:intro:energy} by
\begin{equation}
  \label{eq:thm:approx-energy}
  \calE_\eps(\gamma,u)
  =
  \int_{M_\gamma}
  k(u) \left( H-H_{s}(u) \right)^2 + k_G(u) K \,d\mu
  +
  \int_{M_\gamma}
  \eps |\snabla[M_\gamma]{u}|^2 + \frac{1}{\eps} W(u) \,d\mu.
\end{equation}
The prescribed areas of the lipid phases translate into constraints
on the area of $M_\gamma$ and on the phase integral $\int_{M_\gamma} u
\,d\mu$: if the areas of the lipid phases are given by $A^+$ and
$A^-$, we require that
\begin{equation*}
  \calA_\gamma = A_0 := A^+ + A^-
  \qquad\text{and}\qquad
  \int_{M_\gamma} u \,d\mu = m A_0, \;\text{where }
  m = (A^+ - A^-)/A_0.
\end{equation*}
We assume that the double well potential $W \colon \bbR \to
[0,\infty)$ is a continuous function that vanishes only in $\pm1$ and,
for technical reasons, is $C^2$ around these points. We let $H_s
\colon \bbR \to\bbR$ be a continuous and bounded extension of the
spontaneous curvatures $H_s^\pm \in \bbR$ such that
$H_s(\pm1)=H_s^\pm$. For the bending rigidities $k^\pm > 0$ we suppose
that $k \colon \bbR \to \bbR$ is a continuous and bounded extension of
$k(\pm1) = k^\pm$ satisfying
\begin{equation*}
  \inf_{u \in \bbR} k(u) =: k_0 > 0
\end{equation*}
and that $\widetilde k_G \colon \bbR \to (-\infty,0]$ is a bounded
and continuous extension of $\widetilde k_G(\pm1) = k_G^\pm \leq 0$.
Moreover, we require
\begin{equation}
  \label{eq:thm:kg-assumption}
  k(u)
  >
  - \frac{\widetilde k_G(u)}{2}
  \geq
  - \frac{k_G(u)}{2}
  \qquad\text{uniformly in } u \in \bbR,
\end{equation}
where
\begin{equation*}
  k_G(u) = \min(u^2,1) \widetilde k_G(u).
\end{equation*}

Experimental measurements of the Gauss rigidity are scarce, but
available data suggest that for some membranes $-1 < k_G/(2k) < 0$ and
thus \eqref{eq:thm:kg-assumption} are satisfied
\cite{SiKo04,TeKhSe98}.  Furthermore, our assumptions are
mathematically motivated by the inequalities
\begin{align}
  \label{eq:thm:cond1}
  \calE_\eps(\gamma,u)
  &\geq
  C \int_{M_\gamma} |H|^2 \,d\mu - C,
  \\
  \label{eq:thm:cond2}
  k(u) \left(H-H_s(u)\right)^2 + k_G(u) K
  &\geq
  - C,
\end{align}
which are necessary to obtain a suitable compactness result and the
$\Gamma$-convergence lower bound; here $C$ is a generic constant
independent of $\gamma$. Indeed, expanding the quadratic term on the
left hand side of \eqref{eq:thm:cond2} and applying Young's inequality
with some $\delta>0$ to the mixed term $2 H H_{s}$ yields
\begin{align*}
  k(u) &\left( H-H_{s}(u) \right)^2 + k_G(u) K
  \\
  &=
  - \frac{k_G(u)}{2} |B|^2 
  + \left( k(u) + \frac{k_G(u)}{2} \right) H^2
  + k(u) H_{s}(u)^2
  - 2 k(u) H H_{s}(u)
  \\
  &\geq
  - \frac{k_G(u)}{2} |B|^2 
  + \left( k(u) (1-\delta) + \frac{k_G(u)}{2} \right) H^2
  - k(u) \frac{1-\delta}{\delta} H_{s}(u)^2,
\end{align*}
hence, \eqref{eq:thm:cond2} is satisfied, provided that $k_G(u) \leq
0$ and \eqref{eq:thm:kg-assumption} hold. Then \eqref{eq:thm:cond1} is
true, if additionally the area of $M_\gamma$ is prescribed.

Most interesting is the factor $u^2$ in our definition of $k_G$, which
differs from other diffuse models for the lipid phases
\cite{ElSt10,DuWa08} where the extended Gauss rigidities are bounded
away from $0$. The latter studies do not consider topological changes
in the limit, which, however, is necessary to establish an
equi-coercivity result. The purpose of the $u^2$ is to allow the
construction of appropriate recovery sequences; see the end of Section
\ref{sec:curve-appr-recov} for the details.

We study \eqref{eq:thm:approx-energy} for membranes $(\gamma,u) \in
\calC_\eps \times \calP_\eps$, where
\begin{equation*}
  \begin{split}
  \calC_{\eps}
  :=
  \Big\{
    &\gamma = (x,y) \in C^{0,1}(I; \bbR^2)
    \cap W^{2,1}_{\loc}(I; \bbR^2) :\\
    &|\gamma'| = \text{const}, \;
    y(\partial I)=\set{0}, \;
    y(I) \subset (0,\infty),
    \int_{M_\gamma} |B|^2 \,d\mu < \infty, \;
    \calA_\gamma = A_0
  \Big\}
  \end{split}
\end{equation*}
and
\begin{equation*}
  \calP_\eps
  :=
  \Big\{
    u \in W^{1,1}_{\loc}(I) :
    \int_{M_\gamma} |\snabla[M_\gamma]{u}|^2 \,d\mu < \infty, \;
    \| u \|_\infty \leq C_0, \;
    \int_{M_\gamma} u \,d\mu = m A_0
  \Big\}.
\end{equation*}
The first three conditions in the definition of $\calC_\eps$ ensure
that $\gamma$ is parametrised with constant speed and that $M_\gamma$
is a closed surface. The $L^2$-bound on the second fundamental form of
$M_\gamma$ together with the first two conditions on the phase fields
ensure that the energy \eqref{eq:thm:approx-energy} is well-defined on
$\calC_\eps \times \calP_\eps$. The requirement $\|u\|_\infty \leq
C_0$ with a large constant $C_0 \gg 1$ seems rather strong, but phase
fields with small energy are expected to be close to the interval
$[-1,1]$ anyway. In fact, in many places in the proof the
$L^\infty$-bound can be replaced by a less restrictive condition;
compare \cite[Proposition 3]{Modica87}.

Although the set $\calP_\eps$ depends on the chosen $\gamma \in
\calC_\eps$ via the phase area constraint, we suppress this fact in
the notation, because we usually consider pairs or membranes
$(\gamma,u)$. Instead, we highlight the affiliation to the approximate
energy by the index $\eps$ in $\calC_\eps \times \calP_\eps$. In the
following, we write $M_\eps$ instead of $M_{\gamma_\eps}$ and so
forth, when considering sequences $(\gamma_\eps)$ of curves. If
necessary or useful for clarification we add the curve or $\eps$ as
index to other quantities such as $H_\gamma$, $\mu_\gamma$, or
$y_\gamma$.

The energy \eqref{eq:thm:approx-energy} is invariant under
reparametrisations that preserve the orientation and the regularity
properties of $\gamma$. In particular, if $(\gamma,u)$ satisfies all
requirements of $\calC_\eps \times \calP_\eps$ but only $|\gamma'|
\not=0$ instead of $|\gamma'| = \text{const}$, the corresponding
constant speed parametrisation belongs to $\calC_\eps \times
\calP_\eps$ and has the same energy. Hence, considering only
$|\gamma'| = \text{const}$ is no geometric restriction.


\subsection{Limit setting}

Our limit energy is
\begin{equation*}
  \calE(\gamma,u)
  =
  \int_{M_\gamma} k(u) (H-H_{s}(u))^2 + k_G(u) K \,d\mu
  +
  \sigma \calH^1(M_\gamma(S_u))
\end{equation*}
for curves with parametrisations $\gamma$ in
\begin{equation*}
  \begin{split}
  \calC
  := \Big\{
    &\gamma = (x,y) \in
    C^{0,1}(I;\bbR^2)
    \cap W^{2,1}_{\loc}(\set{y>0}; \bbR^2) :
    \\
    &|\gamma'| = \text{const}, \;
    y(\partial I) = \set{0}, \;
    y \geq 0, \;
    \calH^0(\set{y=0}) < \infty, \;
    \\
    &\int_{M(\set{y>0})} |B|^2 \,d\mu < \infty, \;
    \calA_\gamma = A_0
  \Big\}
\end{split}
\end{equation*}
and associated phase fields $u$ in
\begin{equation*}
  \calP
  :=
  \Big\{
    u \colon I \to \set{\pm1} \text{piecewise constant}:
    \int_M u \,d\mu = m A_0,\;    \calH^1(M_\gamma(S_u)) < \infty
  \Big\}.
\end{equation*}
Here $S_u \subset \set{y>0}$ denotes the countable jump set of $u$ in
$\set{y>0}$, and we call $s \in S_u$ and the corresponding circle
$M_\gamma(\set{s})$ an interface of $(\gamma,u)$.  The constant
$\sigma$ is given by
\begin{equation*}
  \sigma
  =
  2 \int_{-1}^{1} \sqrt{W(u)} \,d u,
\end{equation*}
and
\begin{equation*}
  \calH^1(M_\gamma(S_u)) = 2\pi \sum_{s \in S_u} y(s)
\end{equation*}
is the one-dimensional Hausdorff measure of the union of the countably
many circles $M_\gamma(S_u)$.

The difference between $\calC_\eps$ and $\calC$ is that $\gamma \in
\calC$ may touch the axis of revolution also in the interior of $I$,
but this can happen only at finitely many points. For $\gamma \in
\calC$ we infer from Lemma~\ref{lem:surf:sec-fform-bound} and the
subsequent corollary that $M_\gamma$ consists of finitely many
components which are $C^1$-surfaces and $W^{2,2}$-surfaces away from
the axis of revolution.

The set $\calP$ resembles the set of special functions of bounded
variation SBV with values in $\set{\pm1}$, weighted with the height
$y$ of the generating curve $\gamma \in \calC$. Indeed, for $u \in
\calP$ and any $J \Subset \set{y>0}$ we have $u \in SBV(J;
\set{\pm1})$, but as jumps of height $2$ may accumulate near the axis
of revolution, $u \not\in SBV(I)$ in general. Points in $\set{y=0}$ can
be jump points of $u$ or singular points where one or both one-sided
limits are undefined. We emphasise that in our notation $S_u$ only
contains points in $\set{y>0}$, because the restriction of $u$ to
$\set{y=0}$ does not contribute to the energy $\calE$.


\subsection{\texorpdfstring{$\Gamma$}{\textGamma}-convergence}

We extend $\calE_\eps$ and $\calE$ to $W^{1,1}(I;\bbR^2) \times
L^1(I)$ by setting $\calE_\eps(\gamma,u) = \calE(\gamma,u) = \infty$
whenever $(\gamma,u)$ does not belong to $\calC_\eps \times
\calP_\eps$ and $\calC \times \calP$, respectively. Our approximation
theorem is the following.

\begin{theorem}
  \label{thm:gamma-conv}
  The energies $\calE_\eps$ are equi-coercive, that is, any sequence
  $(\gamma_\eps,u_\eps) \in \calC_\eps \times \calP_\eps$ with
  uniformly bounded energy admits a subsequence that converges
  strongly in $W^{1,1}(I;\bbR^2) \times L^1(I)$ to some $(\gamma,u)
  \in \calC \times \calP$.
  Furthermore, $\calE_\eps$ $\Gamma$-converges to $\calE$ as $\eps \to
  0$, that is,
  \begin{itemize}
  \item for any sequence $(\gamma_\eps,u_\eps)$ that converges to some
    $(\gamma,u)$ in $W^{1,1}(I;\bbR^2) \times L^1(I)$ we have
    \begin{equation*}
      \liminf_{\eps \to 0} \calE_\eps(\gamma_\eps,u_\eps)
      \geq
      \calE(\gamma,u);
    \end{equation*}
  \item for any $(\gamma,u)$ with finite energy $\calE(\gamma,u)$
    there is a recovery sequence $(\gamma_\eps,u_\eps)$ that converges
    to $(\gamma,u)$ in $W^{1,1}(I;\bbR^2) \times L^1(I)$ and satisfies
    \begin{equation*}
      \limsup_{\eps\to0} \calE_\eps(\gamma_\eps,u_\eps)
      \leq
      \calE(\gamma,u).
    \end{equation*}
  \end{itemize}
\end{theorem}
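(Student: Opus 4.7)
The plan is to establish the three conclusions in sequence, with the common engine being the algebraic decomposition that produced \eqref{eq:thm:cond2} together with the classical Modica--Mortola structure for the diffuse term.

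For \textbf{equi-coercivity}, I start from $\calE_\eps(\gamma_\eps,u_\eps) \leq C$ and the area constraint $\calA_{\gamma_\eps}=A_0$: inequality \eqref{eq:thm:cond1} combined with Cauchy--Schwarz yields a uniform bound on $\int_{M_\eps}|H_\eps|\,d\mu$, whence Lemma~\ref{lem:surf:length-bound} gives $\calL_{\gamma_\eps} \leq C$. Since $|\gamma_\eps'|$ is constant, $(\gamma_\eps)$ is equi-Lipschitz, so Arzel\`a--Ascoli produces a uniform limit $\gamma$. The decomposition underlying \eqref{eq:thm:cond2}, together with \eqref{eq:thm:kg-assumption}, also bounds $\int_{M_\eps}|B_\eps|^2\,d\mu$; via \eqref{eq:surf:kappa1-and-gamma} and \eqref{eq:surf:gauss-and-gamma} this controls $\gamma_\eps''$ in weighted $L^2$ and $y_\eps''$ in $L^1$ on compact subsets of $\{y_\eps>0\}$, so I can pass to a weak limit and verify $\gamma \in \calC$ using $y \geq 0$ together with the Gauss--Bonnet count of components from Lemma~\ref{lem:surf:sec-fform-bound}. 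For the phase field, the Modica--Mortola inequality $\eps|\snabla[M_\eps]{u_\eps}|^2 + W(u_\eps)/\eps \geq 2\sqrt{W(u_\eps)}|\snabla[M_\eps]{u_\eps}|$ together with $\|u_\eps\|_\infty \leq C_0$ gives $L^1(y_\eps\,dt)$-compactness of $\Phi(u_\eps)$, with $\Phi'=2\sqrt{W}$, hence of $u_\eps$, forcing the limit $u$ to take values $\pm 1$ a.e.\ and lie in $\calP$.

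For the \textbf{liminf inequality}, I handle the diffuse term by the classical Modica--Mortola argument transplanted to the area measure on $M_\gamma$, which delivers $\liminf \int_{M_\eps}\eps|\snabla[M_\eps]{u_\eps}|^2 + W(u_\eps)/\eps\,d\mu \geq \sigma\calH^1(M_\gamma(S_u))$. For the curvature term I expand as in \eqref{eq:thm:cond2} into a positive multiple of $|B_\eps|^2$ (coefficient $-k_G(u_\eps)/2 \geq c > 0$), a nonnegative multiple of $H_\eps^2$ (from \eqref{eq:thm:kg-assumption} after Young's inequality), and lower order terms. Weak $L^2$ convergence of $\gamma_\eps''$ in the appropriate weighted space gives lower semicontinuity of the two quadratic pieces, while $u_\eps \to u$ a.e.\ and continuity and boundedness of $k$, $k_G$, $H_s$ pass the variable coefficients to the limit by dominated convergence. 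The remaining $k_G(u)K$ contribution in the limit is computed componentwise through Gauss--Bonnet, exploiting that $u \in \{\pm 1\}$ on each component away from the interfaces.

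For the \textbf{recovery sequence}, given $(\gamma,u)$ with $\calE(\gamma,u)<\infty$, I place the optimal Modica--Mortola profile $q$ solving $q' = \sqrt{2W(q)}$ at each point of $S_u$ to obtain $u_\eps$, then perform a small perturbation to enforce $\int_{M_\eps}u_\eps\,d\mu = mA_0$. Since $\gamma$ may touch the axis at finitely many interior points while $\calC_\eps$ requires $y>0$, at each such point I insert a small bridge (a short capped neck) of diameter tending to zero, chosen so that the area correction and the bending contribution both vanish. On these bridges I set $u_\eps \equiv 0$, which exploits the factor $\min(u^2,1)$ in $k_G(u) = \min(u^2,1)\widetilde k_G(u)$: this kills $k_G(u_\eps)$ on the bridges and makes the potentially large Gauss curvature contribution there disappear from the energy. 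The \textbf{main obstacle} will be precisely this recovery construction at axis touch points, where one must lift the curve off the axis while reproducing a limit topology consisting of several spheres pinched together, match the line tension and area constraints simultaneously, and show that all bridging costs are $o(1)$; the tailored definition of $k_G$ is exactly what makes this reconciliation possible.
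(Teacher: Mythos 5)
Your proposal follows essentially the same route as the paper: length bound via Lemma~\ref{lem:surf:length-bound}, local $W^{2,2}$ compactness from the $|B|^2$ bound, Gauss--Bonnet to count components, Modica--Mortola for the phase field, and -- crucially -- neck insertion at axis touch points with $u_\eps \equiv 0$ there to exploit the factor $\min(u^2,1)$ in $k_G$. You have correctly identified the key structural insight that makes the recovery sequence work. That said, there are two genuine gaps worth flagging.

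First, in the liminf step you assert that the coefficient of $|B_\eps|^2$ in the decomposition satisfies $-k_G(u_\eps)/2 \geq c > 0$. This is false: by construction $k_G(u) = \min(u^2,1)\widetilde k_G(u)$, so the coefficient vanishes wherever $u_\eps$ vanishes, and a priori $u_\eps$ can be close to $0$ on sets of non-negligible measure (indeed your own recovery sequence is designed to force $u_\eps = 0$ near the axis). The paper sidesteps this by not attempting lower semicontinuity of a term-by-term decomposition: compactness of $\gamma_\eps$ in $W^{2,2}_\loc(\{y>0\})$ is already secured in the equi-coercivity step \emph{independently} of $u_\eps$, and the bulk liminf \eqref{eq:bulk-lower-bound} follows from weak $L^2_\loc$ convergence of $H_\eps$, $K_\eps$ combined with a.e.\ convergence of the bounded coefficients $k(u_\eps)$, $k_G(u_\eps)$, $H_s(u_\eps)$; the inequality \eqref{eq:thm:cond2} is used only to extend the local estimate to the whole surface. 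If you insist on the decomposition route, you would need a separate argument for the degenerate-coefficient case.

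Second, the recovery construction is substantially under-specified. Before you can ``insert a small bridge'' at an axis touch point you need two nontrivial reductions, each occupying a lemma in the paper: (i) approximation by phase fields with \emph{finitely many} interfaces (Lemma~\ref{lem:approx-noi-finite}) -- otherwise interfaces may accumulate at $\{y=0\}$ and your pointwise profile-placement does not converge; (ii) approximation by curves that hit the axis in \emph{vertical line segments} (Lemma~\ref{lem:approx-simple-curves}) -- otherwise the curve can zig-zag or turn arbitrarily near the axis, and there is no clean place to attach a catenary. Only after these two reductions does the catenoid/circle matching of Lemma~\ref{lem:catenoid-and-circle} apply. Also, a minor point: the bending contribution on the catenoidal neck does \emph{not} vanish -- $\int |B_\delta|^2\,d\mu_\delta$ stays of order $8\pi$ uniformly by \eqref{eq:catenoid-sec-fform-bound}; what vanishes is $\int (H_\delta - H_s)^2$ (because the catenoid is minimal) and the Gauss term after $u_\eps$ is set to $0$. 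Finally, the optimal profile here solves $u' = \sqrt{W(u)}$, not $q' = \sqrt{2W(q)}$, consistently with $\sigma = 2\int_{-1}^1 \sqrt{W}$ and the normalisation $\eps|\nabla u|^2 + W(u)/\eps$.
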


\begin{remark}[Existence of minimisers]
  The energy $\calE_\eps$ is bounded from below on $\calC_\eps \times
  \calP_\eps$; thus, there is a sequence $(\gamma_\eps,u_\eps)$ such
  that $\calE_\eps(\gamma_\eps,u_\eps) = \inf \calE_\eps +
  o(1)_{\eps\to0}$. From equi-coercivity and $\Gamma$-convergence we
  infer that a subsequence of $(\gamma_\eps,u_\eps)$ converges to a
  minimiser of $\calE$ in $\calC \times \calP$, whose existence is
  thus established; see for instance \cite{Braides02} for the
  details.
\end{remark}

\begin{figure}
  \centering
  \includegraphics[width=.95\linewidth]{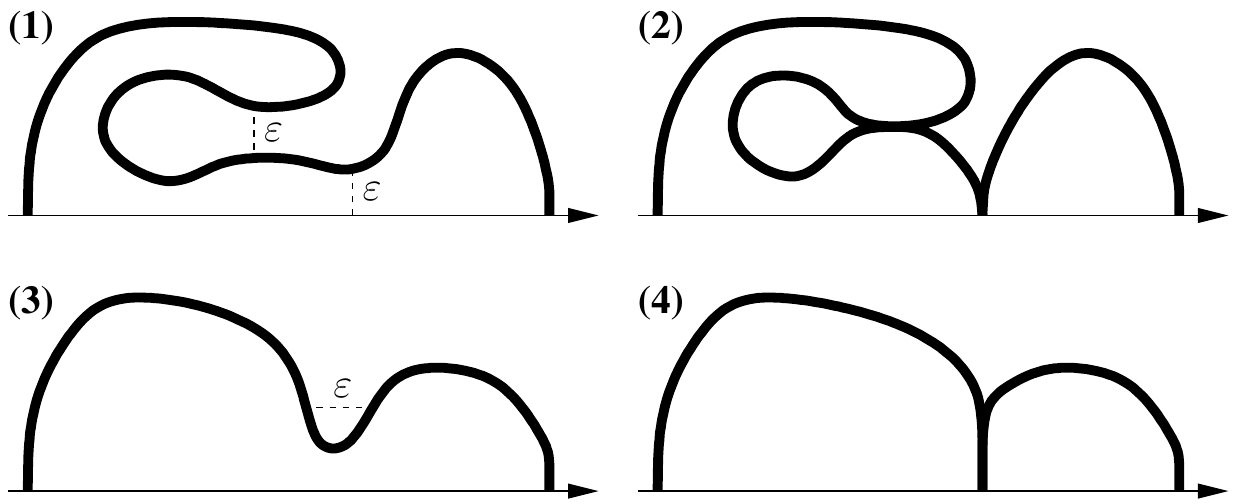}
  \caption{Examples of embedded curves (1) and (3) in $\calC_\eps$
    that lead to non-embedded limits (2) and (4), respectively, in
    $\calC$.  The curves in (3) and (4) satisfy even the stronger
    condition $x' \geq 0$ which prevents a component from touching
    itself, but different components may still touch each other in
    vertical segments near the axis of revolution.}
  \label{fig:embedded-curves}
\end{figure}

\begin{remark}[Embeddedness]
  Our setting and result, which are entirely based on
  parametrisations, do neither need nor guarantee embeddedness. Even
  if $\calE_\eps$ is considered only on the subset of embedded
  membranes or for curves $\gamma=(x,y)$ that satisfy the stronger
  ``generalised graph'' condition $x'\geq0$, which is preserved under
  our convergence, limit curves can touch themselves; see
  Figure~\ref{fig:embedded-curves} for two examples.
\end{remark}


\section{Proof of Theorem \ref{thm:gamma-conv}}
\label{sec:proof}

The proof of Theorem \ref{thm:gamma-conv} is divided into the three
steps equi-coercivity, lower bound and upper bound inequality.


\subsection{Equi-coercivity}

Recalling \eqref{eq:thm:cond1}, which states
\begin{equation*}
  \calE_\eps(\gamma,u)
  \geq
  C_1 \int_{M_\gamma} H^2 \,d\mu - C
\end{equation*}
for some constants $C_1,C>0$ independent of $(\gamma,u) \in \calC_\eps
\times \calP_\eps$, and adding $-8\pi C_1 = -C_1\int_{M_\gamma} 2K
\,d\mu$ to both sides, we find
\begin{equation*}
  \calE_\eps(\gamma,u) + C
  \geq
  C_1 \int_{M_\gamma} |B|^2 \,d\mu
  \geq
  C_1 \int_{M_\gamma} |K| \,d\mu.
\end{equation*}
This means, that $\calE_\eps(\gamma,u)$ bounds the $L^2$-norms of $B$
and $H$ as well as the $L^1$-norm of the Gauss curvature of
$M_\gamma$. Since moreover $\calE_\eps(\gamma,u)+C$ also bounds the
phase field energy from above, compactness for curves and phase fields
can be established separately.


\begin{lemma}
  \label{lem:coerc-curves}
  Let $(\gamma_\eps,u_\eps) \in \calC_\eps \times \calP_\eps$ be a
  sequence with uniformly bounded energy
  $\calE_\eps(\gamma_\eps,u_\eps)$.
  Then there are $\gamma=(x,y) \in \calC$ and a subsequence, not
  relabelled, such that
  \begin{itemize}
  \item $\gamma_\eps \wsto \gamma$ in $W^{1,\infty}(I;\bbR^2)$;
  \item $\gamma_\eps \wto \gamma$ in $W^{2,2}_{\loc}(\set{y>0};
    \bbR^2)$; and
  \item $\gamma_\eps \to \gamma$ in $W^{1,p}(I;\bbR^2)$ for any
    $p \in [1,\infty)$.
  \end{itemize}
\end{lemma}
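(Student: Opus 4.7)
The plan is to first derive uniform $W^{1,\infty}$-bounds from the energy, then upgrade to weak $W^{2,2}_{\loc}$-convergence on $\{y>0\}$, verify that the limit lies in $\calC$, and finally promote weak convergence of derivatives to strong $L^p$-convergence by matching norms.

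First, using \eqref{eq:thm:cond1} and Cauchy--Schwarz together with the fixed area $A_0$, I would bound $\int_{M_\eps}|H|\,d\mu$ uniformly, so Lemma~\ref{lem:surf:length-bound} gives an upper bound on $\calL_{\gamma_\eps}=q_\eps|I|$. A matching lower bound on $q_\eps$ follows from the crude estimate $\|y_\eps\|_\infty\leq q_\eps|I|/2$ (coming from $y_\eps(\partial I)=0$ and $|y_\eps'|\leq q_\eps$) combined with the area constraint $A_0=2\pi q_\eps\int_I y_\eps\,dt$. Along a subsequence $q_\eps\to q\in(0,\infty)$; by the $x$-translation invariance of the energy and Arzel\`a--Ascoli, $\gamma_\eps\to\gamma$ uniformly and $\gamma_\eps\wsto\gamma$ in $W^{1,\infty}(I;\bbR^2)$. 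For any $J\Subset\{y>0\}$, uniform convergence produces $y_\eps\geq c_J>0$ on $J$ for small $\eps$, so rewriting \eqref{eq:surf:kappa1-and-gamma} and controlling $\int|B|^2\,d\mu$ via \eqref{eq:thm:cond2} and the fixed area yields a uniform $W^{2,2}(J)$-bound. A diagonal extraction then delivers the desired $W^{2,2}_{\loc}(\{y>0\};\bbR^2)$-convergence.

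Next I would check $\gamma\in\calC$. The area constraint and $y(\partial I)=\{0\}$ pass by uniform convergence; weak lower semicontinuity on each $J\Subset\{y>0\}$ combined with Fatou gives $\int_{M_\gamma(\{y>0\})}|B|^2\,d\mu<\infty$, and Lemma~\ref{lem:surf:sec-fform-bound} then bounds the number of connected components of $\{y>0\}$. The delicate condition is $\calH^0(\{y=0\})<\infty$: a priori, $\{y=0\}$ could contain nontrivial intervals on which $|\gamma'|=q$ forces $\gamma$ to trace the axis of revolution. Such intervals can be removed by a finite reparametrization (collapsing each to a single point) that preserves constant speed and all the other conditions, producing a limit in $\calC$ in the required sense.

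Finally, the weak-$*$ convergence in $W^{1,\infty}$ gives $\gamma_\eps'\wto\gamma'$ in every $L^p$ with $p\in[1,\infty)$, while $\|\gamma_\eps'\|_{L^p}^p=q_\eps^p|I|\to q^p|I|=\|\gamma'\|_{L^p}^p$ once $|\gamma'|=q$ a.e.\ is secured by the reparametrization. Convergence of norms together with weak convergence yields strong $L^p$-convergence, whence $\gamma_\eps\to\gamma$ in $W^{1,p}(I;\bbR^2)$ for every $p\in[1,\infty)$. The main obstacle is exactly the axis-tracing/reparametrization step: the $\gamma_\eps$ have strictly positive $y$-components while the limit need not, and reconciling this with the fixed parametric interval $I$ so that $\calH^0(\{y=0\})<\infty$ and $|\gamma'|=q$ a.e.\ hold simultaneously is where the argument is most delicate.
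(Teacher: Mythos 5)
Your opening steps are sound and essentially match the paper: the upper bound on $q_\eps=|\gamma_\eps'|$ via \eqref{eq:thm:cond1}, Lemma~\ref{lem:surf:length-bound} and H\"older, the extraction of a subsequence with $\gamma_\eps\wsto\gamma$ in $W^{1,\infty}$ after normalising the $x$-translation, and the $W^{2,2}(J)$ bound on compacta $J\Subset\{y>0\}$ via \eqref{eq:surf:kappa1-and-gamma}. Your direct lower bound $q_\eps\geq\sqrt{A_0/(\pi|I|^2)}$ from $\|y_\eps\|_\infty\leq q_\eps|I|/2$ is a legitimate shortcut compared with the paper, which only deduces $q>0$ a posteriori from $A_0=\lim\calA_\eps$.

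The gap is in your handling of $\calH^0(\{y=0\})<\infty$. The reparametrisation idea cannot be made to work and, more importantly, it dodges the thing that has to be \emph{proved}. First, the lemma asserts that the $W^{1,p}$-limit of the sequence $\gamma_\eps$ itself lies in $\calC$; you cannot replace that limit by a reparametrised curve, since the $\gamma_\eps$ converge strongly to $\gamma$ and to nothing else. Second, your premise that $|\gamma'|=q$ would hold on a hypothetical interval $J\subset\{y=0\}$ is not available: the identity $|\gamma'|=q$ is only obtained on $\{y>0\}$ (through $C^1_{\loc}$ convergence coming from $W^{2,2}_{\loc}$); on $\{y=0\}$ the $W^{1,\infty}$ bound gives only $|\gamma'|\leq q$, and indeed $y'=0$ a.e.\ there. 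Third, even if one collapsed an interval of $\gamma$, the approximants $\gamma_\eps$ have $y_\eps>0$ in the interior and no corresponding interval to remove, so the reparametrised curve would not be a limit of the (reparametrised or not) sequence. What is actually needed is the paper's contradiction argument: if $J\subset\{y=0\}$ were a non-empty open interval, then since $\calA_\eps(J)\to0$ while $\int_{M_\eps}\kappa_{2,\eps}^2\,d\mu_\eps$ is uniformly bounded, Cauchy--Schwarz on
\begin{equation*}
  \Bigl(\int_J|x_\eps'|\,dt\Bigr)^2
  \leq
  \frac{\calA_\eps(J)}{4\pi^2}\int_{M_\eps}\kappa_{2,\eps}^2\,d\mu_\eps
\end{equation*}
forces $x_\eps'\to0$ in $L^1(J)$, hence $y_\eps'^2=q_\eps^2-x_\eps'^2\to q^2>0$ in $L^1(J)$, contradicting $y'=0$ a.e.\ on $J$. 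Combined with Lemma~\ref{lem:surf:sec-fform-bound}, which bounds the number of components of $\{y>0\}$, this gives $\calH^0(\{y=0\})<\infty$.

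Your final step also inherits the gap: you invoke convergence of $L^p$-norms plus weak convergence (Radon--Riesz), but this requires $|\gamma'|=q$ a.e.\ on all of $I$, which you only ``secure by the reparametrisation.'' Once $\{y=0\}$ is known to be finite, you get $\gamma_\eps'\to\gamma'$ a.e.\ on $I$ from the $C^1_{\loc}(\{y>0\})$ convergence, and then strong $W^{1,p}$ convergence for all $p\in[1,\infty)$ follows from dominated convergence (using $|\gamma_\eps'|=q_\eps$ bounded) — which is both simpler and avoids the well-known failure of the weak-plus-norm argument at $p=1$.
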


\begin{proof}
  Let $\gamma_\eps = (x_\eps,y_\eps)$ and $|\gamma'_\eps|=q_\eps$.
  Using Lemma \ref{lem:surf:length-bound} and H{\"o}lder's inequality
  we find
  \begin{equation*}
    2\pi q_\eps |I|
    =
    2\pi \calL_\eps
    \leq
    \int_{M_\eps} |H_\eps| \,d\mu_\eps
    \leq
    \left(
      \calA_\eps \int_{M_\eps} H_\eps^2 \,d\mu_\eps
    \right)^{1/2},
  \end{equation*}
  which bounds the sequence $(q_\eps)$ from above. Furthermore,
  translations in $x$-direction do not change the energy, so we may
  assume that all $\gamma_\eps$ have a common end point and conclude
  that $(\gamma_\eps)$ is bounded in $W^{1,\infty}(I; \bbR^2)$.  We
  can therefore extract a subsequence such that $q_\eps \to q$ in
  $\bbR$ and $\gamma_\eps \wsto \gamma$ in $W^{1,\infty}(I; \bbR^2) =
  C^{0,1}(I; \bbR^2)$; by compact embedding of $W^{1,\infty}$ into
  $C^0$, the convergence of $\gamma_\eps$ is uniform in
  $\overline{I}$. This clearly implies $y \geq 0$ and $y(\partial I)
  = \set{0}$, but also $q>0$ and $y \not\equiv 0$ because
  \begin{equation*}
    A_0
    =
    \calA_\eps
    =
    2\pi q_\eps \int_{I} y_\eps \,d t
    \to
    2\pi q \int_{I} y \,d t.
  \end{equation*}
  Without loss of generality we assume $q=1$, thus $|\gamma'| \leq 1$
  almost everywhere in $I$.

  Taking into account only the just selected subsequence, let $\eps$
  be sufficiently small so that $q_\eps \leq 2$, and let $J \Subset
  \set{y>0}$ and $c_J>0$ be such that $y \geq 2 c_J$ in $J$. By
  uniform convergence of $y_\eps$ we have $y_\eps \geq c_J$ for all
  small $\eps$, and \eqref{eq:surf:kappa1-and-gamma} yields
  \begin{equation}
    \label{eq:W22_loc-bound}
    \frac{1}{2\pi} \int_{M_{\eps}} |B_\eps|^2 \,d\mu_\eps
    \geq
    \frac{1}{2\pi} \int_{M_{\eps}(J)} \kappa_{1,\eps}^2 \,d\mu_\eps
    \geq
    \frac{c_J}{8} \int_{J} |\gamma_\eps''|^2 \,d t.
  \end{equation}
  Since the left hand side of~\eqref{eq:W22_loc-bound} is uniformly
  bounded, a subsequence of $(\gamma_\eps'')$ converges weakly in
  $L^2(J; \bbR^2)$ to some $\gamma_J''$. For this subsequence,
  $\gamma_\eps$ converges weakly in $W^{2,2}(J; \bbR^2)$, and from
  uniqueness of the weak limit we infer that $\gamma_{J}''$ is the
  weak derivative of $\gamma'$ in $J$ and that the whole sequence
  converges. This proves $\gamma_\eps \wto \gamma$ in
  $W^{2,2}_{\loc}(\set{y>0}; \bbR^2)$, and we obtain
  \begin{equation}
    \label{eq:B-local-liminf}
    \int_{M_\gamma(J)} |B|^2 \,d\mu
    \leq
    \liminf_{\eps\to0} \int_{M_\eps(J)} |B_\eps|^2 \,d\mu_\eps
    \leq
    \liminf_{\eps\to0} \int_{M_\eps} |B_\eps|^2 \,d\mu_\eps
  \end{equation}
  for any $J \Subset \set{y>0}$. Exhausting $\set{y>0}$ by $J
  \Subset \set{y>0}$, we conclude
  \begin{equation*}
    \int_{M_\gamma(\set{y>0})} |B|^2 \,d\mu < \infty,
  \end{equation*}
  because the right hand side of \eqref{eq:B-local-liminf} is finite
  and independent of $J$.

  From the compact embedding of $W^{2,2}$ into $C^1$ we know that
  $\gamma_\eps$ converges strongly to $\gamma$ in
  $C^1_{\loc}(\set{y>0}; \bbR^2)$, which implies $\gamma_\eps' \to
  \gamma'$ pointwise in $\set{y>0}$. Thus, we find $|\gamma'| =
  \lim |\gamma_\eps'| = \lim q_\eps = 1$ in $\set{y>0}$ and
  \begin{equation*}
    \calA_\gamma
    =
    2\pi \int_I |\gamma'| y \,d t
    =
    2\pi \int_{\set{y>0}} y \,d t
    =
    \lim_{\eps \to 0}
    2\pi \int_{\set{y>0}} q_\eps y_\eps \,d t
    =
    \lim_{\eps \to 0}
    \calA_\eps
    =
    A_0.
  \end{equation*}

  Finally, to conclude $\gamma \in \calC$ we have to show that
  $\set{y=0}$ is finite. This also yields strong convergence in
  $W^{1,p}(I; \bbR^2)$, because it implies $\gamma_\eps' \to \gamma'$
  almost everywhere in $I$. Assume for contradiction that $J$ is a
  non-empty open subset of $\set{y=0}$. From
  \begin{equation*}
    \left( \int_J |x_\eps'| \,d t \right)^2
    =
    \left( \int_J \frac{|x_\eps'|}{\sqrt{q_\eps y_\eps}}
      \sqrt{q_\eps y_\eps} \,d t \right)^2
    \leq
    \frac{\calA_\eps(J)}{4\pi^2} \int_{M_\eps} \kappa_{2,\eps}^2 \,d\mu_\eps
  \end{equation*}
  we then see that $x_\eps' \to 0$ and $y_\eps'^2 = q_\eps^2 -
  x_\eps'^2 \to 1$ in $L^1(J)$, which contradicts $y'=0$ almost
  everywhere in $\set{y=0}$. Consequently, $\set{y=0}$ does not
  contain interior points, and since by
  Lemma~\ref{lem:surf:sec-fform-bound} the number of components of
  $\set{y>0}$ is finite, we conclude $\calH^0(\set{y=0}) < \infty$.
\end{proof}

\begin{lemma}
  \label{lem:coerc-phase-fields}
  Let $(\gamma_\eps,u_\eps) \in \calC_\eps \times \calP_\eps$ and
  $\gamma \in \calC$ be as in Lemma \ref{lem:coerc-curves}.
  Then there exist a countable set $S \subset I$ with $S \cap J$
  finite for any $J \Subset \set{y>0}$ and $u \in \calP$ with $S_u
  \subset S$ such that for a subsequence $u_\eps \to u$ in measure,
  almost everywhere in $I$, and in $L^p(I)$ for $p \in [1,\infty)$.
\end{lemma}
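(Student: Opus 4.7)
The plan is a Modica--Mortola compactness argument weighted by $y_\eps$, carried out locally on compactly contained subsets of $\{y>0\}$ and patched via a diagonal exhaustion. From \eqref{eq:thm:cond2} and the prescribed area, the curvature part of $\calE_\eps$ is bounded below by a constant independent of $\eps$, so the phase-field part is uniformly bounded:
\[
\int_{M_\eps} \eps\,|\snabla[M_\eps]{u_\eps}|^2 + \tfrac{1}{\eps}W(u_\eps)\,d\mu_\eps \leq C.
\]
Introducing $\Psi(s) := 2\int_{-1}^{s}\sqrt{W(\tau)}\,d\tau$, Young's inequality gives the pointwise bound $|\snabla[M_\eps]{\Psi(u_\eps)}| \leq \eps|\snabla[M_\eps]{u_\eps}|^2 + \eps^{-1}W(u_\eps)$, hence $2\pi\int_I |\Psi(u_\eps)'|\,y_\eps\,dt \leq C$.

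Next, I fix $J \Subset \{y>0\}$. By Lemma~\ref{lem:coerc-curves} one has $y_\eps \geq c_J > 0$ on $J$ for all small $\eps$, so $\Psi(u_\eps)$ is bounded in $BV(J)$ and, via $\|u_\eps\|_\infty \leq C_0$, in $L^\infty(J)$. BV compactness together with $\int_J W(u_\eps)\,dt \leq C\eps/c_J \to 0$ yields a subsequence with $u_\eps \to u$ a.e.\ in $J$ and $u \in \{\pm 1\}$ a.e. A diagonal argument along an exhaustion $J_n \Subset J_{n+1}$ with $\bigcup J_n = \{y>0\}$ produces a single subsequence and a limit $u$ defined a.e.\ on $\{y>0\}$. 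Since $\calH^0(\{y=0\}) < \infty$ for $\gamma\in\calC$, extending $u$ arbitrarily on $\{y=0\}$ gives $u \colon I \to \{\pm 1\}$ with $u_\eps \to u$ a.e.\ in $I$; the $L^\infty$ bound then implies convergence in $L^p(I)$ for every finite $p$ and in measure.

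On each $J_n$, $u$ is a $BV$ function with values $\pm 1$, so the choice $S := S_u$ satisfies $S\cap J_n$ finite and gives countability and local finiteness in $\{y>0\}$. Lower semi-continuity of the weighted total variation, together with the uniform convergence $y_\eps \to y$ on $J_n$ from Lemma~\ref{lem:coerc-curves}, yields
\[
\sigma\sum_{s \in S_u \cap J_n} y(s) \leq \liminf_{\eps \to 0}\int_{J_n}|\Psi(u_\eps)'|\,y_\eps\,dt \leq \frac{C}{2\pi},
\]
and letting $n \to \infty$ gives $\calH^1(M_\gamma(S_u)) = 2\pi\sum_{s \in S_u} y(s) < \infty$. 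The mass constraint $\int_{M_\gamma}u\,d\mu = mA_0$ is passed to the limit in $\int_I u_\eps q_\eps y_\eps\,dt = mA_0/(2\pi)$ by dominated convergence, exploiting $\|u_\eps y_\eps\|_\infty \leq C_0^2$, a.e.\ convergence, the uniform convergence $q_\eps y_\eps \to q y$ in the complement of a small neighbourhood of the finite set $\{y=0\}$, and the uniform smallness of the integrand near the axis. Altogether $u \in \calP$.

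The main obstacle is precisely that the weight $y_\eps$ degenerates near the axis: the BV estimate on $\Psi(u_\eps)$ is only locally uniform in $\{y>0\}$, so \emph{a priori} jumps of $u$ could accumulate at $\{y=0\}$. The weighted lower semi-continuity, combined with the finiteness of $\sum_s y(s)$, is what rules this out and makes the local compactness and the diagonal exhaustion compatible.
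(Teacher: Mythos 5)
Your proposal is correct and follows essentially the same strategy as the paper: localise the Modica--Mortola compactness on compactly contained subsets $J \Subset \{y>0\}$ where $y_\eps$ has a positive lower bound, diagonalise over an exhaustion, and extend across the finite set $\{y=0\}$, with the weighted total-variation bound controlling $\calH^1(M_\gamma(S_u))$. The paper cites the standard references for the localised compactness and postpones the $\calH^1$ finiteness to the lower-bound section (where it appears as the Young-inequality estimate on $2\sqrt{W(u_\eps)}|u_\eps'|$, i.e.\ exactly your $|\Psi(u_\eps)'|$), whereas you spell out the BV argument and the lower semicontinuity in place; this is only a difference in presentation, not in substance.
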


\begin{proof}
  We restrict ourselves to a subsequence of $\gamma_\eps$ that
  converges to $\gamma$ according to Lemma \ref{lem:coerc-curves}; as
  above, we let $|\gamma_\eps'| \equiv q_\eps$ and without loss of
  generality $|\gamma'| \equiv 1$. Uniform convergence implies that
  for $J \Subset \set{y>0}$ there is $c_J>0$ such that $y_\eps \geq
  c_J$ in $J$ for all sufficiently small $\eps$. Therefore, we have
  \begin{equation}
    \label{eq:coerc-phase-fields}
    \frac{1}{2\pi}
    \int_{M_\eps(J)} \eps |\snabla[M_\eps]{u_\eps}|^2 + \frac{1}{\eps}
    W(u_\eps) \,d\mu_\eps
    \geq
    c_J \int_{J} \frac{\eps}{q_\eps} |u_{\eps}'|^2 +
    \frac{q_\eps}{\eps} W(u_{\eps}) \,d t
  \end{equation}
  and the well-known arguments of Modica and Mortola
  \cite{Modica87,MoMo77} apply in $J$; see also \cite[Lemma 6.2 and
  Remark 6.3]{Braides02} for a proof in one dimension. The outcome is
  a finite set of points $S_J \subset J$ and a piecewise constant
  function $u \colon J \to \set{\pm1}$ whose jump set is contained in
  $S_J$ such that a subsequence of $u_\eps$ converges to $u$ in
  measure and almost everywhere in $J \sm S$. Since $(u_\eps)$ is
  uniformly bounded in $L^\infty(I)$, convergence in $L^p(I)$ for any
  $p<\infty$ follows.

  Exhausting $\set{y>0}$ by a sequence of increasing sets such as $J_k
  = \set{y>1/k}$ for $k \to \infty$ and taking a diagonal sequence, we
  find an at most countable set $S \subset \set{y>0}$ and a function
  $u \colon \set{y>0} \to \set{\pm1}$ whose jump set is contained in
  $S$. Moreover, a subsequence of $(u_\eps)$ converges to $u$ in
  measure and almost everywhere in $\set{y>0}$. Then
  $\calH^0(\set{y=0}) < \infty$ and $\|u_\eps\|_\infty \leq C_0$
  provide convergence in $L^p(I)$ for any $1 \leq p < \infty$, and
  taking convergence of $y_\eps$ and $|\gamma_\eps'|$ into account, we
  obtain
    \begin{equation*}
    m A_0
    =
    \int_{M_{\eps}} u_\eps \,d\mu_\eps
    \to
    \int_{M_\gamma} u \,d\mu
  \end{equation*}
  as $\eps \to 0$. The bound $\calH^1(M_\gamma(S_u)) < \infty$
  follows from \eqref{eq:coerc-phase-fields} and Young's inequality;
  the details are given in the lower bound section and are thus here
  omitted.
\end{proof}

\begin{remark}
  In the classical one-dimensional setting without the area measure, a
  uniform $L^\infty$-bound for the phase fields is in fact a result of
  the uniform energy bound; see \cite{Braides02}. In our case,
  however, this bound depends in $J \Subset \set{y>0}$ on the constant
  $c_J$, which is essentially the infimum of $y$ on $J$, and tends to
  infinity as $c_J \to 0$.
\end{remark}


\subsection{Lower bound}

Next we prove the lower bound inequality
\begin{equation}
  \label{eq:lower-bound}
  \liminf_{\eps \to 0} \calE_{\eps}(\gamma_\eps,u_\eps)
  \geq
  \calE(\gamma,u)
\end{equation}
whenever $(\gamma_\eps,u_\eps)$ converges to $(\gamma,u)$ in
$W^{1,1}(I;\bbR^2) \times L^1(I)$. It suffices to examine the case
when the left hand side of \eqref{eq:lower-bound} is finite and to
consider a subsequence such that the lower limit is attained. Then by
definition $(\gamma_\eps,u_\eps) \in \calC_\eps \times \calP_\eps$,
and our compactness argument yields $(\gamma,u) \in \calC \times
\calP$ and the convergence properties listed in Lemmas
\ref{lem:coerc-curves} and \ref{lem:coerc-phase-fields}.

Recalling the formulas $\kappa_{1,\eps} = - \gamma_\eps'' \cdot
\gamma_\eps'^\perp / q_\eps^3$ and $\kappa_{2,\eps} = x_\eps'/(q_\eps
y_\eps)$ for the principal curvatures, we find that $\gamma_\eps \wto
\gamma$ in $W^{2,2}_{\loc}(\set{y>0};\bbR^2)$ implies weak convergence
of $H_\eps$ and $K_\eps$ in $L^2_\loc(\set{y>0})$. Together with
$y_\eps q_\eps \to y q$ uniformly, $u_\eps \to u$ in $L^1(I)$, and the
$L^\infty$-bounds for $k$ and $k_G$ this yields
\begin{align*}
  \int_{M_\gamma(J)} k(u) (H-H_{s}(u))^2 &+ k_G(u) K \,d\mu
  \\
  &\leq
  \liminf_{\eps\to0} \int_{M_\eps(J)}
  k(u_\eps) (H_\eps-H_{s}(u_\eps))^2 + k_G(u_\eps) K_\eps \,d\mu_\eps
\end{align*}
for any $J \Subset \set{y>0}$. Adding temporarily $C A_0$, where $C$
is the constant in \eqref{eq:thm:cond2}, to make the integral on the
right hand side non-negative, we estimate the latter by extending it
to the whole surface $M_\eps$ and exhaust $\set{y>0}$ by intervals $J
\Subset \set{y>0}$ on the left hand side. Thereby, we obtain the bulk
lower bound
\begin{equation}
  \begin{aligned}
    \label{eq:bulk-lower-bound}
    \int_{M_\gamma(\set{y>0})} k(u) (H-H_{s}(u))^2
    &+ k_G(u) K \,d\mu
    \\
    &\leq
    \liminf_{\eps\to0}
    \int_{M_\eps} k(u_\eps) (H_\eps-H_{s}(u_\eps))^2
    + k_G(u_\eps) K_\eps \,d\mu_\eps.
  \end{aligned}
\end{equation}

To analyse the interface energy let $s \in S_u$ and fix an interval $J
\Subset \set{y>0}$ such that $\overline{J} \cap S_u = \set{s}$, which
exists because $S_u \cap \set{y > y(s)/2}$ is finite. From the
convergence of $u_\eps$ we deduce that there are points $a_\eps,
b_\eps \in J$ with $a_\eps < s < b_\eps$ or $b_\eps < s < a_\eps$ such
that $a_\eps \to s$, $b_\eps \to s$, $u_\eps(a_\eps) \to -1$, and
$u_\eps(b_\eps) \to 1$ as $\eps \to 0$. Assuming without loss of
generality that $a_\eps<b_\eps$, we have
\begin{align*}
  \frac{1}{2\pi} \int_{M_\eps(a_\eps,b_\eps)}
  \eps |\snabla[M_\eps]{u_\eps}|^2 + \frac{1}{\eps} W(u_\eps) \,d\mu_\eps
  &\geq
  \left( \inf_{(a_\eps,b_\eps)} y_\eps \right)
  \int_{a_\eps}^{b_\eps} 2 \sqrt{W(u_\eps)}|u_\eps'| \,d t
  \\
  &\geq
  \left( \inf_{(a_\eps,b_\eps)} y_\eps \right)
  \left| \int_{u_\eps(a_\eps)}^{u_\eps(b_\eps)} 2 \sqrt{W(u)} \,d u \right|
\end{align*}
thanks to Young's inequality and a change of variables. Taking the
lower limit yields
\begin{equation}
  \label{eq:single-interf-lower-bound}
  \liminf_{\eps \to 0}
  \int_{M_\eps(a_\eps,b_\eps)}
  \eps |\nabla_{\! {M_\eps}} u_\eps|^2 + \frac{1}{\eps} W(u_\eps) \,d\mu_\eps
  \geq
  2 \pi y(s) \int_{-1}^{1} 2 \sqrt{W(u)} \,d u
  =
  2 \pi y(s) \sigma.
\end{equation}
The above argument applies to each point of any finite subset $S$ of
$S_u$, and in addition we may extend the integral on the left hand
side of \eqref{eq:single-interf-lower-bound} to the whole surface
to obtain
\begin{equation*}
  \liminf_{\eps \to 0}
  \int_{M_\eps} 
  \eps |\nabla_{\! {M_\eps}} u_\eps|^2 + \frac{1}{\eps} W(u_\eps) \,d\mu_\eps
  \geq
  \sigma \calH^1(M_\gamma(S)).
\end{equation*}
Since the left hand side is independent of $S$, the interface lower
bound inequality
\begin{equation}
  \label{eq:inter-lower-bound}
  \liminf_{\eps \to 0}
  \int_{M_\eps} 
  \eps |\nabla_{\! {M_\eps}} u_\eps|^2 + \frac{1}{\eps} W(u_\eps) \,d\mu_\eps
  \geq
  \sigma \calH^1(M_\gamma(S_u))
\end{equation}
follows from taking the supremum over all finite sets $S \subset S_u$.
Combining \eqref{eq:inter-lower-bound} and \eqref{eq:bulk-lower-bound}
yields the lower bound inequality \eqref{eq:lower-bound}.


\subsection{Upper bound}

We now construct a recovery sequence for $(\gamma,u)$ with finite
energy $\calE(\gamma,u)$. To this end, we first show that $(\gamma,u)$
can be approximated by membranes with finitely many interfaces. For
such a membrane, we then obtain a recovery sequence by changing the
curve essentially only near component boundaries and the phase field
only around interfaces and component boundaries. Finally, a diagonal
sequence recovers $(\gamma,u)$.

Throughout this section we assume without loss of generality that
$|\gamma'| \equiv 1$.


\subsubsection{Approximation by finite number of interfaces}

\begin{lemma}
  \label{lem:approx-noi-finite}
  Assume that $(\gamma,u) \in \calC \times \calP$ has countably many
  interfaces. Then there exists $(\gamma,u_\delta) \in \calC \times
  \calP$ for sufficiently small $\delta>0$, each with a finite number
  of interfaces, such that $u_\delta \to u$ in $L^p(I)$ for any $p \in
  [1,\infty)$ and $\calE(\gamma,u_\delta) \to \calE(\gamma,u)$ as
  $\delta \to 0$.
\end{lemma}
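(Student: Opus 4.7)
Finiteness of the interface energy,
\begin{equation*}
  \calH^1(M_\gamma(S_u)) = 2\pi \sum_{s \in S_u} y(s) < \infty,
\end{equation*}
makes $\sum_s y(s)$ an absolutely convergent series. In particular, for every $\delta>0$ the set $T_\delta := \{s \in S_u : y(s) \geq \delta\}$ is finite and the tail $\sum_{s \in S_u \setminus T_\delta} y(s)$ tends to $0$ as $\delta \to 0$. By definition of $\calC$ the open set $\{y>0\}$ decomposes into finitely many components $\omega_i = (a_i, b_i)$, and in each $\omega_i$ the interfaces of $u$ can accumulate only at the axis endpoints because every compact subinterval of $\omega_i$ carries only finitely many jumps. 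The plan is to truncate the accumulating tails, retain the finitely many ``large'' jumps, and then correct the resulting mass defect by a small shift of a single retained interface.

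\textbf{Construction.} Choose $\delta$ small enough that $T_\delta \cap \omega_i \neq \emptyset$ whenever $S_u \cap \omega_i \neq \emptyset$, and for each such $\omega_i$ set $\alpha_i^\delta := \min(T_\delta \cap \omega_i)$, $\beta_i^\delta := \max(T_\delta \cap \omega_i)$. Since $\alpha_i^\delta$ is an isolated point of $S_u$ within $\omega_i$, the one-sided limit $c_i^- := \lim_{t \nearrow \alpha_i^\delta} u(t)$ exists, and analogously $c_i^+$ at $\beta_i^\delta$. Define $\tilde u_\delta := u$ on each compact $[\alpha_i^\delta, \beta_i^\delta] \Subset \omega_i$ (which carries only finitely many jumps), extend by $c_i^-$ on $(a_i, \alpha_i^\delta)$ and by $c_i^+$ on $(\beta_i^\delta, b_i)$, and set $\tilde u_\delta = u$ on components with $S_u \cap \omega_i = \emptyset$ and on $\{y=0\}$. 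Then $\tilde u_\delta$ is piecewise constant with finitely many jumps, and its mass defect satisfies
\begin{equation*}
  \Big| \int_M (\tilde u_\delta - u)\,d\mu \Big|
  \leq 4\pi \sum_i \Big( \int_{a_i}^{\alpha_i^\delta} y\,d t + \int_{\beta_i^\delta}^{b_i} y\,d t \Big) \to 0,
\end{equation*}
because either $\alpha_i^\delta \to a_i$ (in the accumulation case) or $\tilde u_\delta \equiv u$ on the tail (in the non-accumulation case), and similarly at $b_i$. To restore $\int u_\delta\,d\mu = mA_0$ exactly, fix once and for all an anchor jump $s_* \in S_u$ with $y(s_*) > 0$ (the case $S_u = \emptyset$ is trivial); for small $\delta$ we have $s_* \in T_\delta$, so shifting this single jump to $s_* + \tau_\delta$ with $|\tau_\delta| = O(1/y(s_*)) \cdot (\text{mass defect})$ yields $u_\delta \in \calP$. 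Crucially, $y(s_*)$ is bounded below independently of $\delta$, so $\tau_\delta \to 0$.

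\textbf{Convergence.} The exceptional set $E_\delta := \{u_\delta \neq u\}$ is contained in the tails and the shift interval, so $|E_\delta| \to 0$ in Lebesgue measure and thus $u_\delta \to u$ in $L^p(I)$ for every $p \in [1,\infty)$; in addition $\mu(M(E_\delta)) \to 0$. For the bulk energy, the uniform $L^\infty$-bounds on $k$, $k_G$, $H_s$ yield
\begin{equation*}
  \Big| \int_M [k(u_\delta)(H-H_s(u_\delta))^2 + k_G(u_\delta)K] - [k(u)(H-H_s(u))^2 + k_G(u)K]\,d\mu \Big|
  \leq C \int_{M(E_\delta)} (|H|^2 + |K| + 1)\,d\mu,
\end{equation*}
which vanishes by dominated convergence since $|H|^2, |K| \in L^1(M(\{y>0\}))$ by $\gamma \in \calC$. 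For the interface part, $S_{u_\delta}$ equals $S_u \cap \bigcup_i [\alpha_i^\delta, \beta_i^\delta]$ with $s_*$ possibly replaced by $s_* + \tau_\delta$, so $|\calH^1(M_\gamma(S_{u_\delta})) - \calH^1(M_\gamma(S_u))|$ is bounded by the tail $2\pi \sum_{s \in S_u \setminus S_{\tilde u_\delta}} y(s) \to 0$ of the convergent series plus an $O(|\tau_\delta|)$ correction from the shift.

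The principal obstacle is ensuring that the mass correction does not itself produce non-negligible interface energy; this is handled by anchoring the shift at a single fixed jump $s_*$ with $y(s_*)$ bounded below independently of $\delta$, forcing $\tau_\delta \to 0$. The parity of the number of removed jumps in each tail plays no role, because the constant extensions $c_i^\pm$ are chosen as one-sided limits of $u$ at the outermost retained jumps and therefore introduce no additional jumps at the truncation points.
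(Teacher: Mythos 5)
Your proposal is correct and follows essentially the same route as the paper: truncate the interfaces that accumulate at the component boundaries (the paper removes those within parameter distance $\delta$ of $\partial\{y>0\}$, you remove those of height $y(s)<\delta$ lying outside the outermost retained jumps), extend $u$ by its one-sided limits so no new jumps appear, control the bulk and interface errors via the integrability of $|B|^2$ and the summability of $\sum_{s\in S_u} y(s)$, and restore the phase constraint by a small shift of one fixed retained interface. The minor differences (height- versus distance-thresholding, and your dominated-convergence bookkeeping) do not change the substance of the argument.
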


\begin{proof}
  Let $\gamma=(x,y)$ and $3\delta$ be smaller than the minimal length
  of a component of $\{y>0\}$. We construct $u_\delta$ by omitting
  interfaces whose distance on $\gamma$ to a component boundary is
  less than $\delta$. More precisely, for a component $\omega = (a,b)$
  of $\set{y>0}$ we let $a_\delta = a + \delta$ and $b_\delta = b -
  \delta$, which implies $a_\delta < b_\delta$ and
  $\calL_\gamma(a,a_\delta) = \calL_\gamma(b_\delta,b) = \delta$, and
  define $u_\delta$ on $\omega$ to be the continuous extension of $u$
  from $(a_\delta,b_\delta)$ to $\omega$, that is,
  \begin{equation*}
    u_\delta =
    \begin{cases}
      u
      &\text{in } (a_\delta,b_\delta),
      \\
      \lim\limits_{t \searrow a_\delta} u(t)
      & \text{in } (a,a_\delta],
      \\
      \lim\limits_{t \nearrow b_\delta} u(t)
      & \text{in } [b_\delta,b).
    \end{cases}
  \end{equation*}

  Since the number of components $N_\gamma$ is finite, this can be
  done separately for each component, and the composition yields a
  membrane $(\gamma,u_\delta)$ with finitely many interfaces. By
  construction, we have $|u-u_\delta| \leq 2$ and $y \leq \delta$ in
  $(a,a_\delta) \cup (b_\delta,b)$, so we find
  \begin{equation}
    \label{eq:phase-field-error}
    \int_{M_\gamma} |u-u_\delta|^p \,d\mu
    \leq
    2^{p+1} N_\gamma \delta^2
  \end{equation}
  and $u_\delta \to u$ as $\delta\to0$ in $L^p(I)$ for any $p \in
  [1,\infty)$. Furthermore,
  \begin{equation*}
    | \calH^1(M_\gamma(S_u)) - \calH^1(M_\gamma(S_{u_\delta})) |
    \leq
    \calH^1(M(S_u \cap \set{y \leq \delta}))
  \end{equation*}
  as well as
  \begin{multline*}
    \left|
      \int_{M_\gamma}
      k(u) (H-H_{s}(u))^2 + k_G(u) K
      - k(u_\delta) (H-H_{s}(u_\delta))^2 - k_G(u_\delta) K \,d\mu
    \right|
    \\
    \leq
    \int_{M_\gamma(\set{y \leq \delta})}
    2 \|k\|_\infty H^2
    + 4 \|k H_s\|_\infty \left( |H| + \|H_s\|_\infty \right)
    + 2 \|k_G\|_\infty |K| \,d\mu
  \end{multline*}
  vanish in the limit $\delta\to0$, and we deduce
  $\calE(\gamma,u_\delta) \to \calE(\gamma,u)$.

  Finally, for sufficiently small $\delta$ there is an interface $s
  \in S_u \cap S_{u_\delta}$ that is independent of $\delta$ and whose
  distance to all other interfaces is greater than $\delta$. According
  to \eqref{eq:phase-field-error} the error in the phase constraint is
  at most of order $\delta^2$, so it suffices to move $s$ by an order
  of at most $\delta^2$ to the left or right to recover the integral
  constraint $\int_M u_\delta \,d\mu = m A_0$. This additional change
  yields $u_\delta \in \calP$ and does not disturb the convergence of
  phase fields and energy.
\end{proof}

In virtue of Lemma \ref{lem:approx-noi-finite} we assume from now on
that $(\gamma,u)$ has only finitely many interfaces. Then $u$ is
either continuous at points in $\set{y=0}$ or has a well-defined jump.
Moreover, the minimal distance between two interfaces and from an
interface to the boundary of its component is positive. Hence, for any
interface $s \in S_u$ there is an interval $J \Subset \set{y>0}$ that
contains $s$ but no other interface, and for any component boundary
point $s \in \set{y=0} \sm \partial I$ there is an interval $J \subset
I$ that contains $s$ but no other component boundary or interface.


\subsubsection{Local interface recovery}
\label{sec:local-interf-recov}

The recovery of a phase field $u$ with finitely many jumps follows the
lines of the Modica-Mortola theory for phase transitions. The main
difference is the inhomogeneity due to the area measure $d\mu = 2\pi y
\,d t$, but since $u$ will be changed only in an interval of order
$\sqrt{\eps}$ around each interface, this issue is easily dealt with.

It is well known, see for instance~\cite{Alberti00}, that in the
classical one-dimensional setting the $\eps$-energy-minimal profile
for a transition of $u_\eps$ from $-1$ to $+1$ is obtained by
minimising
\begin{equation*}
  G_\eps(u) = \int_{\bbR} \eps |u'|^2 + \frac{1}{\eps} W(u) \,d t
\end{equation*}
among functions $u$ that satisfy $u(0)=0$ and $u(\pm\infty) = \pm
1$. Indeed, setting $u_\eps(t) = u(t/\eps)$ we observe
\begin{equation*}
  G_\eps(u_\eps) = G_1(u)
  \geq
  2\int_{\bbR} \sqrt{W(u)}u' \,d t
  =
  2 \int_{\bbR} \sqrt{W(u)} \,d u
  =
  \sigma,
\end{equation*}
and equality holds if and only if
\begin{equation}
  \label{eq:phase-field-equation}
  u'=\sqrt{W(u)}.
\end{equation}
Equation \eqref{eq:phase-field-equation} admits a local solution $p$
with initial condition $p(0)=0$, because $\sqrt{W(\cdot)}$ is
continuous. Since the constants $+1$ and $-1$ are a global super- and
sub-solution of \eqref{eq:phase-field-equation}, $p$ can be extended
to the whole real line, and due to $W(p)>0$ for $p\in(-1,+1)$, we
obtain $p(t) \to \pm1$ as $t\to\pm\infty$. As a consequence,
$p(t/\eps)$ is admissible and minimises $G_\eps$. Furthermore, by
symmetry of $W$ we can presume $-p(-t)=p(t)$ and need to know the
profile only for $t\geq0$.

Let $(\gamma,u) \in \calC \times \calP$ have finitely many interfaces
and consider $s \in S_u$ and $J \Subset \set{y>0}$ such that
$\overline{J} \cap S_u = \set{s}$. For simplicity of notation we
assume $s=0$. Using an appropriately scaled version of the optimal
profile $p$ and a linear interpolation, we aim to construct the
recovery sequence by replacing $u = \sign t$ on $J$ with
\begin{equation*}
  p_\eps(t) =
  \begin{cases}
    p(t/\eps)
    &\text{if }
    0 \leq t < \sqrt{\eps},
    \\
    p(1/\sqrt{\eps}) + \frac{1}{\eps}(t - \sqrt{\eps})
    &\text{if }
    \sqrt{\eps} \leq t < \sqrt{\eps} + \eps (1-p(1/\sqrt{\eps})),
    \\
    1
    &\text{if }
    \sqrt{\eps} + \eps (1-p(1/\sqrt{\eps})) \leq t
  \end{cases}
\end{equation*}
for $t \geq 0$ and $p_\eps(t)=-p_\eps(-t)$ for $t<0$; if $u = - \sign
t$ in $J$, we use $-p_\eps$. Since $\gamma$ is in general not
symmetric around $s=0$ we have to correct $p_\eps$ in order to
conserve the phase integral constraint.

\begin{lemma}
  \label{lem:phase-field-limsup}
  There is $u_\eps \in W^{1,2}(J)$ with $\set{u_\eps \not= u} \Subset
  J$ such that $\|u_\eps\|_\infty \leq C_0$, $u_\eps \to u$ in
  $L^1(J)$, $\int_{M(J)} u_\eps \,d\mu = \int_{M(J)} u \,d\mu$, and
  \begin{equation}
    \label{eq:phase-field-limsup}
    \limsup_{\eps \to 0}
    \int_{M_\gamma(J)}
    \eps |\snabla[M_\gamma]{u_\eps}|^2 + \frac{1}{\eps} W(u_\eps) \,d\mu
    \leq
    2 \pi \sigma y(s).
  \end{equation}
\end{lemma}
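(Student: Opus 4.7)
The plan is to take, on a small neighborhood of $s=0$, the scaled-optimal-profile ansatz $p_\eps$ as written in the text (with overall sign chosen to match the direction of the jump of $u$), extended by $u=\pm 1$ outside, and then introduce a tiny translation $\tau_\eps$ of this profile to enforce the phase integral constraint exactly. Concretely, set
\begin{equation*}
  u_\eps(t) :=
  \begin{cases}
    \epsilon_u\,p_\eps(t-\tau_\eps) & \text{if } |t-\tau_\eps| \leq \sqrt\eps + \eps(1-p(1/\sqrt\eps)), \\
    u(t) & \text{otherwise in } J,
  \end{cases}
\end{equation*}
where $\epsilon_u\in\{\pm1\}$ is chosen so that $u_\eps$ matches $u$ on the complement. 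Since $p_\eps$ reaches $\pm 1$ before exiting the transition zone, $u_\eps$ is continuous and piecewise $C^1$, hence in $W^{1,2}(J)$, with $\|u_\eps\|_\infty \leq 1 \leq C_0$ and $\{u_\eps\neq u\}\Subset J$ for small $\eps$; pointwise a.e.\ convergence together with this uniform bound gives $u_\eps \to u$ in $L^1(J)$.

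For the energy estimate first set $\tau_\eps=0$ and split the transition zone into three pieces: (i) the inner region $|t|\leq\sqrt\eps$, (ii) the linear bridge $\sqrt\eps \leq |t| \leq \sqrt\eps + \eps(1-p(1/\sqrt\eps))$, and (iii) the constant region where the energy density vanishes. In (i) the change of variables $t=\eps\tau$ and the identity $p'=\sqrt{W(p)}$ give
\begin{equation*}
  \int_{-\sqrt\eps}^{\sqrt\eps}
  \Bigl(\eps (u_\eps')^2 + \tfrac{1}{\eps} W(u_\eps)\Bigr) y(t)\,dt
  =
  \int_{-1/\sqrt\eps}^{1/\sqrt\eps}
  2\,p'(\tau)\sqrt{W(p(\tau))}\,y(\eps\tau)\,d\tau,
\end{equation*}
and continuity of $y$ at $s$ together with $p(\pm\infty)=\pm 1$ make this tend to $y(s)\int_{-1}^{1} 2\sqrt{W(u)}\,du = \sigma y(s)$. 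On (ii), $|u_\eps'|=1/\eps$ over a set of length $\eps(1-p(1/\sqrt\eps))$ contributes $1-p(1/\sqrt\eps)\to 0$ to the gradient term; since $W\in C^2$ near $\pm 1$ vanishes to second order, the potential term is $O((1-p(1/\sqrt\eps))^3)$. Multiplying by $2\pi$ and using continuity of $y$ yields \eqref{eq:phase-field-limsup}.

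It remains to enforce $\int_{M_\gamma(J)} u_\eps\,d\mu = \int_{M_\gamma(J)} u\,d\mu$ exactly. With $\tau_\eps=0$ the defect $e_\eps := 2\pi\int_J (u_\eps-u)\,y\,dt$ is only $O(\eps)$: by oddness of $p_\eps$ around $0$ and Lipschitz continuity of $y$, it reduces to $2\pi\int_0^{\sqrt\eps+O(\eps)} (p_\eps(t)-1)(y(t)-y(-t))\,dt = O(\eps)$. The map
\begin{equation*}
  F(\tau) := 2\pi \int_J \bigl(\epsilon_u\,p_\eps(t-\tau) - u(t)\bigr) y(t)\,dt
\end{equation*}
is continuous in $\tau$ with $F'(0) = -4\pi\epsilon_u y(s) + o(1)$ as $\eps\to 0$, so by the intermediate value theorem there is a unique $\tau_\eps$ of size $O(\eps)$ solving $F(\tau_\eps)=0$. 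Because $|\tau_\eps|\ll\sqrt\eps$, the shifted transition zone is still contained in $J$, and the calculation in (i) is unaffected in the limit since $y(s+\tau_\eps)\to y(s)$.

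The main obstacle is coordinating three requirements at once: the profile must transition through the double-well region in an energetically near-optimal way, reach $\pm 1$ exactly inside $J$ so the match to $u$ is regular, and preserve the phase integral. The linear bridge handles the second issue at negligible energy cost (thanks to the $C^2$-decay of $W$ near $\pm 1$), and the $O(\eps)$ translation handles the third without disturbing the leading-order $\sigma y(s)$ bound because the width of the transition region is $O(\sqrt\eps)\gg|\tau_\eps|$.
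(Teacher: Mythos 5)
Your construction of the transition profile and the energy estimate are exactly the paper's: the same $p_\eps$ (optimal profile on $|t|\le\sqrt\eps$, linear bridge to $\pm1$, constant outside), the same splitting, and the same conclusion that the inner part contributes $\sigma y(s)$ while the bridge is negligible. Where you genuinely differ is in restoring the phase integral constraint. The paper perturbs additively, $u_\eps = p_\eps + \alpha_\eps f$ with a fixed bump $f$ supported on one side and $\alpha_\eps = \int_{M(J)} u - p_\eps\,d\mu = o(\sqrt\eps)$, and controls the extra potential term via a Taylor expansion of $W$ at $\pm1$ (this is where the $C^2$ assumption on $W$ enters). You instead translate the profile by $\tau_\eps$ and solve $F(\tau_\eps)=0$ by an intermediate value argument, using the oddness cancellation to get the sharper defect estimate $O(\eps)$, so $|\tau_\eps|\ll\sqrt\eps$ and the leading-order energy is untouched. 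Both routes are valid. Yours has the small advantages that $\|u_\eps\|_\infty\le 1$ automatically and that no expansion of $W$ near $\pm1$ is needed for the correction (though you should replace the heuristic ``$F'(0) = -4\pi\epsilon_u y(s)+o(1)$'' by the elementary monotonicity bound $\int_\bbR \bigl(p_\eps(t)-p_\eps(t-\tau)\bigr)\,dt = 2\tau$ together with $\inf y>0$ near $s$, which gives $F(\pm C\eps)$ of opposite signs and makes the IVT step rigorous without differentiating $F$). The paper's additive correction buys flexibility that is used later: in Corollary \ref{cor:curve-approx} and in Lemma \ref{lem:approx-simple-curves} the same device absorbs extraneous constraint errors of order $\sqrt\eps$ coming from the curve modifications, which a pure translation of a single profile would handle less cleanly since the required shift would then be comparable to the transition-zone width.
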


\begin{proof}
  Convergence $p_\eps \to u$ in $L^1(J)$ is obvious from the
  definition of $p_\eps$, and the estimate
  \eqref{eq:phase-field-limsup} with $p_\eps$ instead of $u_\eps$
  follows by taking the upper limit $\eps \to 0$ in
  \begin{align*}
    \int_{M_\gamma(J)}
    \eps |\snabla[M_\gamma]{p_\eps}|^2 + \frac{1}{\eps} W(p_\eps) \,d\mu
    &\leq
    2\pi
    \left( \sup_{[-\eps,\eps]} y \right)
    \int_{-1\sqrt{\eps}}^{1/\sqrt{\eps}} |p'(t)|^2 + W(p(t)) \,d t
    \\
    &\quad
    + 2\pi
    \left( 1-p(1/\sqrt{\eps}) \right)
    \left( \sup_{J} y \right)
    \left( 1+\sup_{[-1,1]} W \right).
  \end{align*}
  To recover the constraint, let $f \colon J \to \bbR$ be smooth, have
  compact support in $J \cap \set{t>0}$ and satisfy $\int_{M_\gamma(J)} f
  \,d\mu = 1$. Then the phase integral is conserved by $u_\eps =
  p_\eps + \alpha_\eps f$ if
  \begin{equation*}
    \alpha_\eps = \int_{M_\gamma(J)} u-p_\eps \,d\mu.
  \end{equation*}
  From
  \begin{equation*}
    \int_{M_\gamma(0,\sqrt{\eps})} 1-p_\eps \,d\mu
    \leq
    2\pi \|y\|_\infty \sqrt{\eps} \int_0^1 1-p(t/\sqrt{\eps}) \,d t
    =
    o(\sqrt{\eps}),
  \end{equation*}
  we infer that $\alpha_\eps$ is of order $o(\sqrt{\eps})$, which is
  sufficient to ensure convergence $u_\eps \to u$ in $L^1(J)$ and the
  energy inequality
  \begin{equation*}
    \limsup_{\eps \to 0}
    \int_{M_\gamma(J)}
    \eps |\snabla[M_\gamma]{u_\eps}|^2 + \frac{1}{\eps} W(u_\eps) \,d\mu
    \leq
    2 \pi \sigma y(s)
  \end{equation*}
  thanks to
  \begin{equation*}
    \frac{1}{\eps} W(\pm1+\alpha_\eps f)
    =
    \frac{1}{\eps} \left( 
      W(\pm1) + \alpha_\eps f W'(\pm1) + O(\alpha_\eps^2)
    \right)
    =
    o(1).
  \end{equation*}
  By construction, we have $u_\eps \in W^{1,2}(J)$ and
  $\|u_\eps\|_\infty \leq \|p_\eps\|_\infty + |\alpha_\eps| 
  \|f\|_\infty \leq C_0$ for all sufficiently small $\eps>0$.
\end{proof}

\begin{remark}
  Lemma \ref{lem:phase-field-limsup} remains true if $\gamma$ is
  replaced by a sequence $\gamma_\eps$ that satisfies
  $|\gamma_\eps'|\equiv q_\eps \to 1$,
  $|\calA_\eps(J)-\calA_\gamma(J)|=o(\sqrt{\eps})$ and $\gamma_\eps
  \to \gamma$ in $W^{1,p}(J;\bbR^2)$ for some $p \in [1,\infty)$.
\end{remark}


\subsubsection{Curve approximation and recovery}
\label{sec:curve-appr-recov}

To obtain a recovery sequence for the curves we have to change
segments of $\gamma=(x,y)$ near interior points on the axis of
revolution. Close to the axis the second principal curvature becomes
unbounded unless $x'=0$, therefore we base our construction on scaled
catenoids in order to control the mean curvature integral in the
energy $\calE_\eps$. With the topological changes introduced by this
construction and their effect on the Gauss curvature integral we deal
later by adapting the phase field.

One issue with the above idea is that the catenoids have to make a
$C^1$-connection with the original surface, which even after taking
symmetries into account can display several types of behaviour. For
instance, the generating curve might leave the axis of revolution
turning only in one direction, zig-zagging in $x$-direction, or as a
vertical line segment; see Figure \ref{fig:curve-constructions}. In a
first step we therefore reduce the number of possible situations by
showing that a membrane can be approximated by membranes that only
have vertical line segments near the axis of revolution.

\begin{figure}
  \centering
  \includegraphics[width=.95\linewidth]{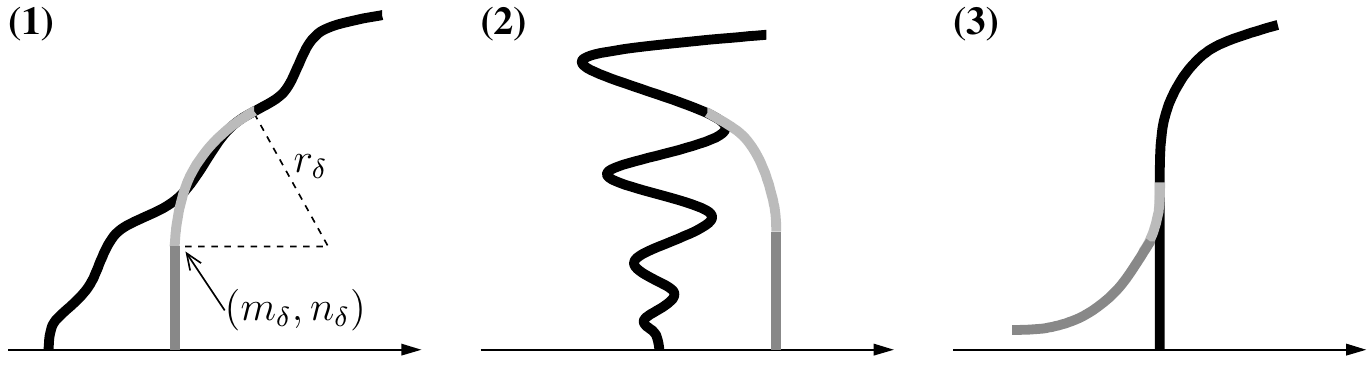}
  \caption{Examples of curves $\gamma$ (black) near the axis of
    revolution and their recovery (grey): $\gamma$ turns in
    one-direction (1), zig-zags (2), or is vertical near the
    axis of revolution (3);
    it is approximated in (1) and (2) by a circle segment (light grey)
    and a vertical line (dark grey) as constructed in Lemma
    \ref{lem:approx-simple-curves};
    the vertical line in (3) is recovered by a catenary (dark grey)
    and a circle segment (light grey) as in Lemma
    \ref{lem:catenoid-and-circle}.}
  \label{fig:curve-constructions}
\end{figure}

\begin{lemma}
  \label{lem:approx-simple-curves}
  Let $(\gamma,u) \in \calC \times \calP$ have finitely many
  interfaces. Then for sufficiently small $\delta>0$ there is
  $(\gamma_\delta,u_\delta) \in \calC \times \calP$ with finitely many
  interfaces such that $\gamma_\delta \to \gamma$ in
  $W^{1,p}(I;\bbR^2)$, $u_\delta \to u$ in $L^p(I)$ for any $p \in
  [1,\infty)$, and $\calE(\gamma_\delta,u_\delta) \to \calE(\gamma,u)$
  as $\delta \to 0$. Moreover, each $\gamma_\delta$ meets the axis of
  revolution in vertical line segments, that is, for any $s \in
  \set{y=0}$ there are $a_\delta,b_\delta \in I$,
  $b_\delta<s<a_\delta$ such that $y_\delta$ restricted to
  $(b_\delta,s)$ and $(s,a_\delta)$, respectively, is a vertical line.
\end{lemma}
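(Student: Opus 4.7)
The plan is to perform a local $\delta$-scale surgery near each axis point of $\gamma$. Since $\gamma \in \calC$ forces $\calH^0(\set{y=0}) < \infty$, the set $\set{y=0}$ consists of finitely many points $s_1,\dots,s_N$, and by the hypothesis of finitely many interfaces with $S_u \subset \set{y>0}$, for all sufficiently small $\delta>0$ the parameter $\delta$-neighbourhoods of the $s_i$ are pairwise disjoint and contain no point of $S_u$. The surgeries can therefore be carried out independently near each axis point.

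Near each $s = s_i$ and on each adjoining component (say $\omega = (s,b)$), I would replace $\gamma$ on $(s,s+\delta)$ by a $C^1$-junction consisting of a circular arc of radius comparable to $\delta$, tangent to $\gamma$ at $\gamma(s+\delta)$ and rotating the tangent direction to become vertical, followed by a vertical line segment extending down to the axis. By Lemma~\ref{lem:surf:sec-fform-bound}, $\gamma \in C^1(\overline\omega;\bbR^2)$ with $\gamma'(s^+)$ perpendicular to the axis, so the required turning angle $\epsilon_\delta := |\phi(s+\delta) - \pi/2|$ tends to $0$. On the vertical line segment both principal curvatures vanish (it is tangent to the axis and $\cos\phi=0$), and on the transition arc direct computation using $\kappa_1 \sim 1/\delta$, $\kappa_2 \sim \epsilon_\delta/\delta$, area $\sim \delta^2 \epsilon_\delta$ gives
\begin{equation*}
  \int \kappa_1^2 \,d\mu = O(\epsilon_\delta),
  \qquad
  \int \kappa_2^2 \,d\mu = O(\epsilon_\delta^3),
  \qquad
  \int |K| \,d\mu = O(\epsilon_\delta^2),
\end{equation*}
which all vanish as $\delta \to 0$. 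The curvature energy carried by the removed segment of $\gamma$ on $(s,s+\delta)$ vanishes by absolute continuity of the finite integrals $\int_{M_\gamma} |B|^2 \,d\mu$ and $\int_{M_\gamma} k(u)(H-H_s(u))^2 \,d\mu$ on regions shrinking to $\set{s}$.

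The phase field $u_\delta$ is defined to agree with $u$ outside the surgery regions and to extend by the constant value of $u$ on each surgery interval, which is unambiguous because no interface lies there. Both $\calA_{\gamma_\delta}$ and $\int u_\delta\,d\mu_\delta$ are perturbed by $O(\delta^2)$; a homothetic rescaling $\gamma_\delta \mapsto \lambda_\delta \gamma_\delta$ with $\lambda_\delta = 1 + O(\delta^2)$ restores $\calA_{\gamma_\delta} = A_0$ (and, since area and phase integral scale together, preserves the ratio), after which a shift of one interface by $O(\delta^2)$ restores $\int u_\delta\,d\mu_\delta = m A_0$; neither adjustment disturbs the energy limit or the convergences $\gamma_\delta \to \gamma$ in $W^{1,p}(I;\bbR^2)$ and $u_\delta \to u$ in $L^p(I)$. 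Reparametrising the resulting curve with constant speed on $I$ places it in $\calC$. The main obstacle is the matching condition at an interior axis point $s$: the two vertical segments coming from the two adjoining components must terminate at a common point on the axis so that $\calH^0(\set{y_\delta = 0})$ remains finite. A symmetric circular-arc construction will not automatically achieve this, so one must use the two-parameter freedom in the arcs (radius and direction of curvature on each side) to force both vertical segments to land at $(x(s),0)$; verifying that this tuning can be done with corrections of lower order than the surgery scale $\delta$, so that the energy estimates above survive unchanged, is the key technical point.
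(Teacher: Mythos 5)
Your overall plan coincides with the paper's: near each axis point, replace a short arc of $\gamma$ by a circular arc rotating the tangent to vertical, append a vertical segment down to the axis, then repair the area and phase constraints and reparametrise to constant speed. The curvature bookkeeping you sketch is consistent (with radius $\sim\delta$ one indeed gets $\int\kappa_1^2\,d\mu=O(\epsilon_\delta)$, etc.), and the idea of leaving the phase field constant on the surgery windows is exactly what the paper does. Two departures and one incomplete point are worth noting.

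First, you restore the area by a global homothety and then move an interface; the paper instead fixes an interval $J\Subset\set{y>0}\sm S_u$ on which $\gamma(J)$ is not a catenary segment and adds a small compactly supported perturbation $\alpha f$ there, arguing by continuity and sign change that some $\alpha_\delta\to 0$ recovers the area exactly. Both work; the homothety is a bit cleaner but one must observe (and you should state) that it perturbs $H$, $K$ and $d\mu$ by factors $\lambda_\delta^{-1}$, $\lambda_\delta^{-2}$, $\lambda_\delta^{2}$ respectively, which is harmless only because $\lambda_\delta\to 1$. Second, the paper picks a subsequence $t_\delta\searrow s$ on which $x'(t_\delta)$ has a \emph{fixed} sign (or is identically zero, in which case $\gamma$ is already vertical there), and its circle radius is $r_\delta=y(t_\delta)/(2x'(t_\delta))$, determined by the matching data rather than prescribed to be $\sim\delta$; your choice of $r\sim\delta$ is also admissible, but you should note that when $x'(s+\delta)=0$ no arc is needed, and that the arc's sense of rotation depends on the sign of $x'(s+\delta)$.

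The genuine gap is in what you flag as the key technical point. You require both vertical segments at an interior $s\in\set{y=0}$ to land at $(x(s),0)$ and propose tuning the radii and rotation senses on the two sides to achieve this. That is unnecessary and, as stated, not obviously achievable with corrections of lower order. What is actually needed is only that the two vertical segments terminate at a \emph{common} point on the axis (not specifically at $(x(s),0)$), and this is obtained for free by rigidly translating the entire remaining part of $\gamma_\delta$ in $x$ (and shifting the parameter in $t$ to keep the parametrisation contiguous). Translations in $x$ change no curvature, no area and no phase integral, and the translation amounts vanish as $\delta\to0$, so they do not disturb any of the convergences you have already established. Once this is seen, the ``key technical point'' disappears; this is exactly how the paper handles it (``$\gamma_\delta$ and the rest of the original curve $\gamma$ have to be shifted in $x$ and $t$; these shifts vanish as $\delta\to 0$'').
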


\begin{proof}
  For the local construction around a point on the axis of revolution
  we consider the left boundary of one component $\omega = (a,b)$ of
  $M_\gamma$ where $\gamma$ is not vertical; at right component
  boundaries a mirrored version applies. For simplicity of notation
  and we assume $a=0$, $\gamma(0) = (0,0)$, and $|\gamma'|=1$ in $I$.

  From Lemma \ref{lem:surf:sec-fform-bound} we know that $x'(0)=0$ and
  $|y'(t)| \to 1$ as $t \searrow 0$. It is sufficient to consider the
  case $y'(t) \to +1$ as $t \searrow 0$, as the construction for
  $y'(t) \to -1$ is obtained by traversing the former one backwards.
  Let $J = (0,t_0) \subset \omega$ be an interval that contains no
  interface of $(\gamma,u)$ and such that $y' \geq 1/2$ in $J$.  Since
  $\gamma$ is not vertical near $a=0$, we find a sequence $(t_\delta)
  \subset J$ such that $t_\delta \searrow 0$, $y(t_\delta) \searrow
  0$, $x'(t_\delta) \searrow 0$ as $\delta \to 0$ and either
  $x'(t_\delta) > 0$ or $x'(t_\delta) < 0$ for any $\delta$. Again it
  suffices to consider the case $x'(t_\delta)>0$, as the other is
  dealt with by a mirrored construction.

  We aim to connect $\gamma$ at $t=t_\delta$ to a circle with unit
  speed parametrisation $(k_\delta,l_\delta)$ given by
  \begin{equation*}
    k_\delta(t)
    =
    r_\delta - r_\delta \cos \left( \frac{t-a_\delta}{r_\delta} \right)
    + m_\delta
    \qquad\text{and}\qquad
    l_\delta(t)
    =
    r_\delta \sin \left( \frac{t-a_\delta}{r_\delta} \right) + n_\delta
  \end{equation*}
  with radius $r_\delta$, shifts $m_\delta$, $n_\delta$, and parameter
  shift $a_\delta$ to be found; see
  Figure~\ref{fig:curve-constructions}(1). At $t = a_\delta$ we have
  $(k_\delta',l_\delta') = (0,1)$, thus the circle can be connected to
  a vertical line segment provided that $n_\delta =
  l_\delta(a_\delta)>0$. At $t=t_\delta$ we have to satisfy the
  conditions
  \begin{align*}
    k_\delta'(t_\delta) &= x'(t_\delta),
    &
    k_\delta(t_\delta) &= x(t_\delta),
    \\
    l_\delta'(t_\delta) &= y'(t_\delta),
    &
    l_\delta(t_\delta) &= y(t_\delta)
  \end{align*}
  in order to match end points and derivatives of $\gamma$ and the
  circle. A short computation shows that
  \begin{equation}
    \label{eq:approx-circ-1}
    \frac{t_\delta-a_\delta}{r_\delta}
    =
    \arctan \frac{x'(t_\delta)}{y'(t_\delta)}
  \end{equation}
  and 
  \begin{equation*}
    x(t_\delta) = r_\delta (1-y'(t_\delta)) + m_\delta,
    \qquad
    y(t_\delta) = r_\delta x'(t_\delta) + n_\delta.
  \end{equation*}
  These equations determine $a_\delta$, $m_\delta$, and $r_\delta$ in
  terms of the given $t_\delta$, $\gamma(t_\delta)$,
  $\gamma'(t_\delta)$, and the still free $n_\delta$; choosing
  $n_\delta = y(t_\delta)/2 > 0$, we obtain
  \begin{equation*}
    r_\delta = \frac{n_\delta}{x'(t_\delta)},
    \quad
    m_\delta = x(t_\delta) - n_\delta
    \frac{1-y'(t_\delta)}{x'(t_\delta)},
    \quad\text{and}\quad
    a_\delta = t_\delta - n_\delta
    \frac{\arctan( x'/y' )(t_\delta)}{x'(t_\delta)}.
  \end{equation*}
  The shift $n_\delta$ tends to $0$ as $\delta \to 0$ by definition,
  and $m_\delta \to 0$ is a consequence of
  \begin{equation*}
    \frac{1-y'(t_\delta)}{x'(t_\delta)}
    =
    \frac{1-y'(t_\delta)}{\sqrt{1-y'(t_\delta)^2}}
    =
    \sqrt{\frac{1-y'(t_\delta)}{1+y'(t_\delta)}}
    \to 0.
  \end{equation*}
  Moreover, $0 \leq \arctan z \leq z$ for $z \geq 0$ and $1/2 \leq
  y'(t_\delta) \leq 1$ imply
  \begin{equation*}
    t_\delta
    \geq
    a_\delta
    \geq
    t_\delta - \frac{y(t_\delta)}{2 y'(t_\delta)}
    \geq
    t_\delta - y(t_\delta)
    \geq
    0,
  \end{equation*}
  hence $a_\delta \to 0$ as $\delta \to 0$ follows.

  With these circles we define a local approximation for $\gamma$ by
  \begin{equation*}
    \gamma_\delta(t) = 
    \begin{cases}
      \gamma(t)
      &\text{if } t_\delta \leq t,
      \\
      (k_\delta(t),l_\delta(t))
      &\text{if } a_\delta \leq t < t_\delta,
      \\
      (m_\delta,t+n_\delta-a_\delta)
      &\text{if } a_\delta-n_\delta \leq t < a_\delta.
    \end{cases}
  \end{equation*}
  Here the third part is a vertical line segment of unit speed that
  connects $(m_\delta,n_\delta)$ at $t=a_\delta$ with $(m_\delta,0)$
  at $t=a_\delta-n_\delta$. Clearly, $\gamma_\delta$ belongs to
  $W^{2,2}((a_\delta-n_\delta,t_0);\bbR^2)$. Since the vertical line
  and the circle segment vanish in the limit $\delta \to 0$, we have
  pointwise convergence of $\gamma_\delta$ and $\gamma_\delta'$ to
  $\gamma$ and $\gamma'$, respectively. Moreover, the area of
  $M_\delta(a_\delta-n_\delta,t_0)$ converges to the area of
  $M(0,t_0)$.

  On the vertical segment both principal curvatures and all curvature
  integrals are zero. On the circle segment we have
  $|\kappa_{1,\delta}| = 1/r_\delta$ and thus
  \begin{align}
    \nonumber
    \frac{1}{2\pi} \int_{M_\delta(a_\delta,t_\delta)}
    \kappa_{1,\delta}^2 \,d\mu_\delta
    &=
    \int_{a_\delta}^{t_\delta} \frac{1}{r_\delta^2}
    \left( r_\delta \sin ((t-a_\delta)/r_\delta) + n_\delta \right) d t
    \\ 
    \label{eq:first-curv-estimate}
    &\leq
    2\frac{t_\delta-a_\delta}{r_\delta} 
  \end{align}
  using $n_\delta/r_\delta = x'(t_\delta) \in (0,1]$; for the second
  principal curvature $\kappa_{2,\delta} = x_\delta'/y_\delta$ we
  compute
  \begin{align}
    \nonumber
    \frac{1}{2\pi} \int_{M_\delta(a_\delta,t_\delta)}
    \kappa_{2,\delta}^2 \,d\mu_\delta
    &=
    \int_{a_\delta}^{t_\delta} \frac{1}{r_\delta}
    \frac{\sin((t-a_\delta)/r_\delta)^2}
    {\sin((t-a_\delta)/r_\delta) + n_\delta/r_\delta} \,d t
    \\
    \label{eq:second-curv-estimate}    
    &\leq
    \frac{t_\delta-a_\delta}{r_\delta}.
  \end{align}
  Due to \eqref{eq:approx-circ-1} and $x'(t_\delta) \to 0$, both
  \eqref{eq:first-curv-estimate} and \eqref{eq:second-curv-estimate}
  tend to $0$ as $\delta \to 0$, hence we obtain
  \begin{equation*}
    \int_{M_\delta(a_\delta-n_\delta,t_0)} |B_\delta|^2 \,d\mu_\delta
    \to
    \int_{M(0,t_0)} |B|^2 \,d\mu.
  \end{equation*}
  As $u$ is constant in $J=(0,t_0)$, we may define $u_\delta(t) =
  u|_J$ for $t \in [a_\delta-n_\delta,t_0]$ and $u_\delta(t) = u(t)$
  for $t \in \omega, t>t_0$. As with the curves, $u_\delta$ converges
  to $u$ pointwise, and we obtain
  \begin{multline*}
    \int_{M_\delta(a_\delta-n_\delta,t_0)}
    k(u_\delta) \left( H_\delta-H_s(u_\delta) \right)^2
    + k_G(u_\delta) K_\delta \,d\mu_\delta
    \\
    \to
    \int_{M(0,t_0)}
    k(u) \left( H-H_s(u) \right)^2
    + k_G(u) K \,d\mu.
  \end{multline*}
  Note in particular, that both Gauss curvature integrals are equal,
  because they depend only on the tangent angle of $\gamma_\delta$ or
  $\gamma$ at $t_\delta$ and $a_\delta-n_\delta$ or $0$, respectively.

  In order to fit the above construction into the neighbouring
  components of $\omega$, $\gamma_\delta$ and the rest of the original
  curve $\gamma$ have to be shifted in $x$ and $t$. These shifts,
  however, vanish as $\delta \to 0$ and thus do not disturb the proved
  convergences. Applying the above procedure to the boundaries of
  each component and gluing together the resulting segments gives a
  membrane $(\gamma_\delta,u_\delta)$ defined on some interval
  $I_\delta$, which converges to $I$ as $\delta \to 0$ in the sense
  that the boundary points converge.

  It remains to correct the area and the phase field constraint as
  well as the parameter interval. For the area constraint we fix $J
  \Subset \set{y>0} \sm S_u$ such that apart from the shifts $\gamma$
  is unchanged in $J$ for all small $\delta$, $x'>0$ or $x'<0$ in
  $\overline{J}$, and such that $M_\gamma(J)$ is not part of a
  catenoid. Such an interval exists, because otherwise $\gamma$
  restricted to any component of $\set{y>0}$ would consist only of
  vertical lines and catenary segments, which is impossible for a
  $C^1$-curve that starts and ends on the $x$-axis. After an
  additional parameter shift of $\gamma_\delta$ we may assume that
  $\gamma_\delta(J) = \gamma(J)$ up to an $x$-shift. Let $f \in
  C_c^\infty(J; \bbR^2)$ and consider the curve $\widetilde
  \gamma_{\delta,\alpha} = \gamma_\delta + \alpha f$, whose
  corresponding surface of revolution has the area
  \begin{equation*}
    \calA_{\widetilde \gamma_{\delta,\alpha}}
    =
    \calA_{\gamma_\delta} + \calA_{\widetilde \gamma_{\delta,\alpha}}(J)
    - \calA_\gamma(J).
  \end{equation*}
  Then the requirement $\calA_\gamma = \calA_{\widetilde
    \gamma_{\delta,\alpha}}$ is equivalent to
  \begin{equation}
    \label{eq:area-recovery-1}
    \calA_{\widetilde \gamma_{\delta,\alpha}}(J) - \calA_\gamma(J)
    =
    \calA_\gamma - \calA_{\gamma_\delta}.
  \end{equation}
  The left hand side of \eqref{eq:area-recovery-1} equals $0$ for
  $\alpha=0$ and depends continuously on $\alpha$; it is strictly
  positive for one sign of $\alpha$ and strictly negative for the
  other, since $\gamma(J)$ is not a catenoid segment and $M_\gamma(J)$
  not stationary for the area. The right hand side of
  \eqref{eq:area-recovery-1} vanishes as $\delta \to 0$, hence for all
  sufficiently small $\delta$ there is an $\alpha_\delta$ such that
  \eqref{eq:area-recovery-1} holds and $a_\delta \to 0$ as $\delta \to
  0$. Thus, gluing together $\widetilde \gamma_{\delta,\alpha_\delta}$
  instead of $\gamma_\delta$ accounts for the area constraint at the
  cost of violating the constant speed requirement. The latter,
  however, is fixed by a global reparametrisation, which also gives a
  membrane defined on $I$. Since $I_\delta \to I$ as $\delta \to 0$
  and the perturbations from the area recovery vanishes with
  $\alpha_\delta \to 0$ in any function space, these
  reparametrisations converge to the identity in $W^{2,2}$ and the
  convergences of curvature integrals, curves, and phase fields still
  hold. Using the uniform bounds on $\gamma_\delta$, $\gamma_\delta'$,
  and $u_\delta$ we obtain convergence of $\gamma_\delta$ in
  $W^{1,p}(I; \bbR^2)$ and $u_\delta$ in $L^p(I)$. Since number and
  height of interfaces are not affected, the interface energy remains
  unchanged.

  The phase integral constraint is easily recovered by moving an
  existing interface slightly or introducing one or finitely many new
  ones at a height that vanishes with $\delta \to 0$.
\end{proof}

\begin{remark}
  The construction in the proof of Lemma
  \ref{lem:approx-simple-curves} can also be done at $\partial I$.
  Hence, the Lemma comprises the result that any $\gamma \in \calC$
  can be approximated by curves from $\calC \cap W^{2,2}(I;\bbR^2)$.
\end{remark}


The next step is to find a recovery sequence for membranes
$(\gamma,u)$ as constructed in Lemma \ref{lem:approx-simple-curves}.
To this end, let $s \in \set{y=0}$ and fix $J \Subset I$ such that
$\overline{J} \cap (\set{y=0} \cup S_u) = \set{s}$ and $\gamma$ is a
vertical line in $J \cap \set{t>s}$ and $J \cap \set{t<s}$. For
simplicity of notation we assume again $s=0$, $\gamma(0) = (0,0)$ and
$|\gamma'|=1$.

A $\delta$-catenoid is the surface generated by a $\delta$-catenary
whose unit speed parametrisation $c_\delta = (i_\delta,j_\delta) \in
C^\infty(\bbR;\bbR^2)$ is given by
\begin{equation*}
  i_{\delta}(t) = \delta \arcsinh \tfrac{t}{\delta},
  \qquad
  j_{\delta}(t) = \sqrt{\delta^2+t^2}.
\end{equation*}
Its principal curvatures are
\begin{equation*}
  - \kappa_{1,\delta}(t)
  =
  \kappa_{2,\delta}(t)
  =
  \frac{\delta}{\delta^2+t^2},
\end{equation*}
and thus we have
\begin{equation}
  \label{eq:catenoid-sec-fform-bound}
  \int_{M_{c_\delta}(a,b)} |B_\delta|^2 \, d\mu_\delta
  =
  4\pi \int_{a}^{b} \frac{\delta^2}{(\delta^2+t^2)^2} \sqrt{\delta^2+t^2} \,d t
  =
  4\pi \left. \frac{t}{\sqrt{\delta^2+t^2}} \right|_a^b
  \leq
  8\pi
\end{equation}
for any $\delta>0$ and all $a,b \in \bbR$, $a<b$. Since the
$\delta$-catenary satisfies $c_\delta(0) = (0,\delta)$ and
$c_\delta'(0) = (1,0)$, it suffices to study a construction for $J
\cap \set{t\geq0}$ and join it with its mirrored counterpart in $J
\cap \set{t\leq0}$.

\begin{lemma}
  \label{lem:catenoid-and-circle}
  Assume that $\gamma$ is a vertical line segment in $J=(0,t_0)$, that
  is $\gamma(t)=(0,t)$ in $J$.  Then for all sufficiently small
  $\delta$ depending only on $\gamma$ there is a curve $\gamma_\delta
  =(x_\delta,y_\delta) \in W^{2,2}(J;\bbR^2)$ such that
  \begin{itemize}
  \item $\gamma_\delta$ satisfies $y_\delta>0$, $x_\delta' \geq 0$ in
    $J$ and there is $J_\delta=(0,t_\delta)$ for some $t_\delta \to
    0$ as $\delta \to 0$ such that $\gamma_\delta$ is a vertical line
    segment in $J \sm J_\delta$ and
    \begin{equation*}
      |\gamma_\delta'| =
      \begin{cases}
        1            &\text{if } t = J \sm \widetilde J,\\
        1 + r_\delta &\text{if } t \in \widetilde J,
      \end{cases}
    \end{equation*}
    where $r_\delta \to 0$ in $W^{1,2}(\widetilde J) \cap
    C^0_c(\widetilde J)$ as $\delta \to 0$ and $\widetilde J \subset J
    \sm J_\delta$;
  \item at the end points of $\gamma_\delta(J)$ we have
    \begin{equation*}
      \gamma_\delta(0) = (0,\delta),
      \quad
      \gamma_\delta'(0) = (1,0),
      \quad
      \gamma_\delta(t_0) = (x(t_0) + o(1), y(t_0)),
      \quad
      \gamma_\delta'(t_0) = \gamma'(t_0);
    \end{equation*}
  \item $\gamma_\delta \to \gamma$ in $W^{1,p}(J;\bbR^2)$ for any $p
    \in [1,\infty)$ as $\delta \to 0$;
  \item $\calA_{\delta}(J) = \calA(J) + o(1)$ and $\displaystyle
    \int_{M_{\delta}(J)} u \,d\mu_\delta = \int_{M_\gamma(J)} u \,d\mu
    + o(1)$ as $\delta \to 0$;
  \item $\displaystyle \sup_{\delta>0} \int_{M_{\delta}(J)} |B_\delta|^2
    \,d\mu_\delta < \infty$; and
  \item $\displaystyle \int_{M_{\delta}(J)}
    k(u) (H_\delta-H_{s}(u))^2 \,d\mu_\delta
    \to
    \int_{M_\gamma(J)} k(u) (H-H_{s}(u))^2 \,d\mu$ as $\delta \to 0$.
  \end{itemize}
\end{lemma}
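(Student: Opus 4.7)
The plan is to define $\gamma_\delta$ on $J$ as the concatenation of three pieces: (i) the $\delta$-catenary $c_\delta$ on an initial interval $[0,T_\delta]$ with $T_\delta := \sqrt\delta$; (ii) a short circular arc of fixed radius $r_0>0$ that rotates the tangent from $c_\delta'(T_\delta)$---which makes an angle $\alpha_\delta = \arctan(\sqrt\delta) = O(\sqrt\delta)$ with vertical---back to exactly $(0,1)$; and (iii) a vertical line segment at horizontal position $x_\ast$ continuing up to $y$-height $t_0$. Both joins are $C^1$ by the choice of the arc center. Let $J_\delta = (0,t_\delta)$ denote the parameter interval of (i) and (ii) together; since $t_\delta = T_\delta + r_0\alpha_\delta = O(\sqrt\delta)$, one has $t_\delta \to 0$ and $\gamma_\delta$ is vertical on $J\setminus J_\delta$. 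A direct computation shows that the total arclength of (i)+(ii)+(iii) exceeds $t_0$ only by $O(\delta^{3/2})$, and I absorb this mismatch by a smooth reparametrization supported in a fixed subinterval $\widetilde J \Subset J\setminus J_\delta$ (which exists for small $\delta$), producing a speed $|\gamma_\delta'| = 1 + r_\delta$ on $\widetilde J$ with $r_\delta \in C^0_c(\widetilde J)$ and $\|r_\delta\|_{W^{1,2}} = O(\delta^{3/2}) \to 0$.

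The endpoint conditions $\gamma_\delta(0) = (0,\delta)$, $\gamma_\delta'(0) = (1,0)$, $\gamma_\delta(t_0) = (x_\ast, t_0)$ with $x_\ast = O(\delta\log(1/\delta)) \to 0$, and $\gamma_\delta'(t_0) = (0,1)$, together with the monotonicity $x_\delta'\geq 0$ (as $i_\delta'>0$ on (i), the arc is chosen so that $x_\delta'$ decreases to $0$ without changing sign, and $x_\delta' = 0$ on (iii)), are immediate. Uniform convergence $\gamma_\delta\to\gamma$ in $C^0(J)$ holds because the deviation occurs only on $J_\delta$, which has vanishing length, and on $\widetilde J$ where the curve is a vertical line with $x$-coordinate tending to $0$; combined with the uniform $L^\infty$-bound on $\gamma_\delta'$, dominated convergence gives $\gamma_\delta\to\gamma$ in $W^{1,p}(J;\bbR^2)$ for all $p\in[1,\infty)$.

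For the remaining bullets I exploit two facts: catenoids are minimal, so $H_\delta\equiv 0$ on (i), and vertical lines generate flat disks, so $\kappa_1 = \kappa_2 = 0$ on (iii), including the reparametrized version since $x_\delta'=0$ and $\gamma_\delta''\cdot \gamma_\delta'^\perp = 0$ there. The surface areas of pieces (i) and (ii) are $O(\delta)$ because $y_\delta = O(\sqrt\delta)$ and their arclengths are $O(\sqrt\delta)$; hence $\calA_{\delta}(J) = \pi(t_0^2 - y_\ast^2) + O(\delta) \to \pi t_0^2 = \calA_\gamma(J)$, and the phase integral converges by the same estimate since $u$ is constant on $J$. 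The bound $\int_{M_\delta(J)}|B_\delta|^2\,d\mu_\delta \leq 8\pi + o(1)$ follows from \eqref{eq:catenoid-sec-fform-bound} on (i), from $|B_\delta|^2 \leq r_0^{-2} + O(1)$ combined with area $O(\delta)$ on (ii), and from $B_\delta \equiv 0$ on (iii). Finally, $k(u)(H_\delta - H_s(u))^2 = k(u)H_s(u)^2$ on pieces (i) and (iii), so the integral equals $k(u)H_s(u)^2 \calA_\delta(J)$ plus an arc contribution of order $O(\delta)$, converging to $k(u)H_s(u)^2 \calA_\gamma(J) = \int_{M_\gamma(J)} k(u)(H-H_s(u))^2\,d\mu$ since $H\equiv 0$ on $M_\gamma(J)$.

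The main technical point is coordinating the scalings so that all six bullets hold simultaneously. The choice $T_\delta = \sqrt\delta$ is dictated by the need for the tangent-angle mismatch $\alpha_\delta$, the catenary endpoint height $y_\delta(T_\delta)$, and the horizontal displacement $\delta\arcsinh(T_\delta/\delta)$ to vanish; once $T_\delta$ is fixed this way, a fixed arc radius $r_0$ suffices to keep $\int|B_\delta|^2$ uniformly bounded, and the residual $O(\delta^{3/2})$ arclength correction then easily fits into a $W^{1,2}$-vanishing reparametrization supported in $\widetilde J$.
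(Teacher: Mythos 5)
Your construction is essentially the paper's: a $\delta$-catenary on $[0,\delta^\beta]$ (you fix $\beta=1/2$, the paper allows $\beta\in(0,3/4)$), followed by a short circular arc of fixed radius returning the tangent to vertical, then a vertical segment, with a compactly supported speed correction on $\widetilde J$ absorbing the $O(\delta^{3/2})$ endpoint mismatch. You phrase that last step as a reparametrization where the paper uses a vertical perturbation $y_\delta\mapsto y_\delta-n_\delta F$, but on a vertical segment these are the same operation, and all the curvature, area, and endpoint estimates match the paper's.
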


\begin{proof}
  With $s_\delta$ and $t_\delta$ to be determined, we replace $\gamma$
  by a $\delta$-catenary in some interval $[0,s_\delta)$ and a segment
  of a circle of radius $1$ in $[s_\delta,t_\delta)$, which connects
  the catenary and the shifted original curve; see
  Figure~\ref{fig:curve-constructions}(3). Writing
  \begin{equation*}
    k_\delta(t) = \sin(t-b_\delta) + \hat k_\delta
    \qquad\text{and}\qquad
    l_\delta(t) = -\cos(t-b_\delta) + \hat l_\delta
  \end{equation*}
  for the coordinates of the circle and fixing $\delta$ and $s_\delta$
  for the moment, we aim to determine $t_\delta$, $b_\delta$, $\hat
  k_\delta$, $\hat l_\delta$, and shifts $m_\delta$, $n_\delta$ such
  that
  \begin{equation*}
    \gamma_\delta(t) =
    \begin{cases}
      c_\delta(t)
      &\text{if } 0 \leq t < s_\delta,
      \\
      (k_\delta(t),l_\delta(t))
      &\text{if } s_\delta \leq t < t_\delta,
      \\
      (m_\delta,t + n_\delta)
      &\text{if } t_\delta \leq t
    \end{cases}
  \end{equation*}
  is continuously differentiable at $s_\delta$ and $t_\delta$. The
  corresponding conditions are
  \begin{align*}
    k_\delta'(s_\delta) &= i_\delta'(s_\delta),
    &
    l_\delta'(s_\delta) &= j_\delta'(s_\delta),
    &
    k_\delta(s_\delta) &= i_\delta(s_\delta),
    &
    l_\delta(s_\delta) &= j_\delta(s_\delta),
    \\
    k_\delta'(t_\delta) &= 0,
    &
    l_\delta'(t_\delta) &= 1,
    &
    k_\delta(t_\delta) &= m_\delta,
    &
    l_\delta(t_\delta) &= t + n_\delta,
  \end{align*}
  and a short calculation shows
  \begin{align*}
    b_\delta
    &= s_\delta - \arctan (s_\delta/\delta),
    &
    t_\delta
    &= \pi/2 + b_\delta,
    \\
    \hat k_\delta
    &= i_\delta(s_\delta) - \sin(s_\delta-b_\delta),
    &
    \hat l_\delta
    &= j_\delta(s_\delta) + \cos(s_\delta-b_\delta),
    \\
    m_\delta
    &= 1 + \hat k_\delta,
    &
    n_\delta
    &= \hat l_\delta - t_\delta.
  \end{align*}
  If we let $s_\delta = \delta^\beta$ for some $\beta \in (0,3/4)$, we
  find
  \begin{equation*}
    t_\delta \sim \delta^\beta + \delta^{1-\beta},
    \qquad
    m_\delta \sim \delta \ln\delta^{\beta-1},
    \qquad
    n_\delta \sim \delta^\beta + \delta^{1-\beta},
  \end{equation*}
  that is, the catenary and circle vanish in the limit $\delta \to 0$,
  and therefore $\gamma_\delta \to \gamma$ in $W^{1,p}(J; \bbR^2)$ for
  $p \in [1,\infty)$ and $\calA_{\gamma_\delta}(J) \to
  \calA_\gamma(J)$.  A more precise estimate shows
  $\calA_{\gamma_\delta}(J) = \calA_\gamma(J) +
  O(\delta+\delta^{2\beta} + \delta^{2-2\beta})$ and the same order
  for the error in the phase integral constraint.

  The principal curvatures of the circle segment are
  \begin{equation*}
    \kappa_1 = -1
    \qquad \text{and} \qquad
    \kappa_2 = \frac{\cos(t-b_\delta)}{\hat l_\delta - \cos(t-b_\delta)},
  \end{equation*}
  thus the second fundamental form is estimated by
  \begin{align*}
    \frac{1}{2\pi}\int_{M_{\gamma_\delta}(s_\delta,t_\delta)}
    |B_\delta|^2 \,d\mu_\delta
    &\leq
    \left( 1 + \frac{\cos^2(s_\delta-b_\delta)}{j_\delta(s_\delta)}
    \right)
    \cdot (t_\delta - s_\delta)
    \\
    &=
    \left( 1 + \frac{\delta^2}{(\delta^2+\delta^{2\beta})^{3/2}}
    \right)
    \cdot
    \left( \pi/2 - \arctan \delta^{\beta-1} \right)
    \\
    &\sim
    \left( 1 + \delta^{2-3\beta} \right) \cdot \delta^{1-\beta}
    =
    \delta^{1-\beta} + \delta^{3-4\beta},
  \end{align*}
  which tends to $0$ as $\delta \to 0$ due to $0 < \beta <
  3/4$. By~\eqref{eq:catenoid-sec-fform-bound} the second fundamental
  forms of $M_{\gamma_\delta}$ are thus uniformly bounded in
  $L^2$. Similarly, the integral of $H_{\gamma_\delta}^2$ over the
  circle segments vanishes in the limit $\delta \to 0$, and since
  $H_{\gamma_\delta}(t) = 0$ for $t<s_\delta$ and
  $H_{\gamma_\delta}(t) = H_\gamma(t)$ for $t>t_\delta$, we obtain
  \begin{equation*}
    \int_{M_{\gamma_\delta}(J)}
    k(u) (H_\delta-H_{s}(u))^2 \,d\mu_{\gamma_\delta}
    \to
    \int_{M_{\gamma}(J)} k(u) (H-H_{s}(u))^2 \,d\mu
  \end{equation*}
  as $\delta \to 0$.

  To dispose of the shift $n_\delta$ in $y$-direction, fix $\widetilde
  J \Subset J$ with $t_\delta < \inf \widetilde J$ for all
  sufficiently small $\delta$ and a function $f \in
  C_c^\infty(\widetilde J)$ with $\int_J f \,d t = 1$. The perturbed
  curve $\widetilde \gamma_\delta = (\widetilde x_\delta, \widetilde
  y_\delta) = ( x_\delta, y_\delta - n_\delta F )$, where $F(t) =
  \int_0^t f(s) \,d s$, has the desired end point $y$-coordinate
  \begin{equation*}
    \widetilde y_\delta(t_0) = y_\delta(t_0) - n_\delta = y(t_0).
  \end{equation*}
  Since $|n_\delta| \to 0$ as $\delta \to 0$, the perturbation
  vanishes in any function space to which $\gamma_\delta$ belongs.
  The second fundamental form is still uniformly bounded in $L^2(J)$,
  because for all small $\delta$ the perturbation is supported in a
  vertical line segment of $\gamma_\delta$, where both principal
  curvatures are equal to $0$. Moreover, we have $\inf_{\widetilde J}
  \widetilde y_\delta > 0$ and the error in the area constraint in $J$
  and in the phase integral constraint are of the same order as above.
\end{proof}


Now it is easy to see why $k_G(u)$ has to be adapted near points on
the axis of revolution, even if $u$ does not have a jump
there. Consider for instance a surface $M_\gamma$ consisting of two
balls connected at the axis of revolution and assume that $k_G(u)
\equiv -1$. Then $\int_{M_\gamma} k_G K \,d\mu = -8\pi$, while for any
approximation $M_\delta$ with one component we have $\int_{M_\delta}
k_G K_\delta \,d\mu_\delta = - 4\pi$. Since the mean curvature
integral converges, the total curvature energy drops in the limit
$\eps \to 0$. Changing the phase field such that $k_G(u_\eps) = 0$
compensates for this effect.

In the following corollary we combine all previous constructions and
apply the additional phase field change. This finishes the proof of
the upper bound.

\begin{corollary}
  \label{cor:curve-approx}
  Let $(\gamma,u) \in \calC \times \calP$ have finitely many
  interfaces and vertical line segments near component boundaries in
  $I$. Then there are $(\gamma_\eps, u_\eps) \in
  \calC_\eps \times \calP_\eps$ such that $\gamma_\eps \to \gamma$ in
  $W^{1,1}(I;\bbR^2)$, $u_\eps \to u$ in $L^1(I)$ and
  $\calE_\eps(\gamma_\eps,u_\eps) \to \calE(\gamma,u)$ as $\eps \to
  0$.
\end{corollary}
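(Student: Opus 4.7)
The plan is to assemble the recovery sequence by combining the interior construction of Lemma~\ref{lem:catenoid-and-circle} (which eliminates axis contacts) with the Modica--Mortola profile of Lemma~\ref{lem:phase-field-limsup} (which handles the genuine interfaces of $u$), and then to cope with the topological mismatch by exploiting the factor $u^2$ in the definition of $k_G$. More precisely, for $\delta=\delta(\eps)\to 0$ to be chosen later I would apply Lemma~\ref{lem:catenoid-and-circle} at every interior point $s \in \{y=0\}\setminus\partial I$, gluing in a $\delta$-catenoid plus circle segment on each side of $s$, and leave $\gamma$ unchanged at the endpoints of $I$. After an overall reparametrisation to constant speed and an area correction performed on a small interval where $x'$ is nonzero (exactly as in the proof of Lemma~\ref{lem:approx-simple-curves}), the resulting curve $\gamma_\eps$ belongs to $\calC_\eps$ and converges to $\gamma$ in $W^{1,p}(I;\bbR^2)$ with $\calA_{\gamma_\eps}\to\calA_\gamma$.

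For the phase field I would construct $u_\eps$ piecewise: on each compact interval away from interfaces and axis points set $u_\eps = u = \pm 1$; around each interface $s \in S_u$ insert the scaled optimal profile from Lemma~\ref{lem:phase-field-limsup}, which contributes $2\pi\sigma y(s)+o(1)$ to the diffuse interface energy; and around each interior axis point of $\gamma$ insert an \emph{additional} transition in which $u_\eps$ passes from the constant value $\pm 1$ through $0$ on the catenoid region and back, again built from a rescaled optimal profile. The length of this artificial transition is of order $\eps$, and it lies in the region where $y_\eps \lesssim \delta+t_\delta$, so the associated phase field energy is at most a constant times $\delta$ by the same Young's inequality calculation as in Lemma~\ref{lem:phase-field-limsup}; choosing $\delta=\delta(\eps)\to 0$ slowly enough then kills this contribution.

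The main obstacle is precisely the Gauss curvature bookkeeping flagged just before the corollary: the approximation $M_{\gamma_\eps}$ typically has strictly fewer components than $M_\gamma$, so extending $u$ continuously through a catenoid would make $\int_{M_{\gamma_\eps}} k_G(u_\eps) K_\eps\,d\mu_\eps$ miss a quantised $4\pi k_G(\pm 1)$ per fused component in the limit. This is exactly why $u_\eps$ is forced through $0$ on the catenoid region: since $k_G(u)=\min(u^2,1)\widetilde k_G(u)$ vanishes at $u=0$, a plateau $\{u_\eps=0\}$ covering the catenoidal part makes the $k_G K_\eps$ integrand vanish there, and the Gauss-curvature contribution over the catenoid/circle segment is discarded. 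On the intervals where $u_\eps\equiv\pm 1$ outside the transition layers one recovers $\int k_G(u) K \,d\mu$ on each original component by Lemma~\ref{lem:catenoid-and-circle} together with the convergence of the Gauss-Bonnet boundary terms, so the Gauss-curvature energy of $\calE_\eps$ converges to the correct limit. The mean curvature part $\int k(u_\eps)(H_\eps-H_s(u_\eps))^2 \,d\mu_\eps$ converges by the last item of Lemma~\ref{lem:catenoid-and-circle} together with $u_\eps\to u$ a.e.

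It remains to restore the area constraint $\calA_{\gamma_\eps}=A_0$ and the phase integral constraint $\int_{M_\eps} u_\eps\,d\mu_\eps = mA_0$; both defects are of order at most $\delta+\sqrt\eps$ and can be absorbed by a compactly supported perturbation of $\gamma_\eps$ on a fixed interval where $x'\neq 0$ and $\gamma$ is not part of a catenoid (as in Lemma~\ref{lem:approx-simple-curves}), respectively by shifting one interface by an amount $o(1)$ (as in the proof of Lemma~\ref{lem:phase-field-limsup}). Finally, combining the present construction with Lemmas~\ref{lem:approx-noi-finite} and~\ref{lem:approx-simple-curves} and extracting a diagonal sequence $\delta=\delta(\eps)\to 0$ slow enough that all $O(\delta)$ errors beat the $o(1)$ convergences, one obtains the desired recovery sequence for a general $(\gamma,u)\in\calC\times\calP$, which completes the proof of the upper bound.
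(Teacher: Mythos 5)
Your proposal is correct and follows essentially the same route as the paper: apply Lemma~\ref{lem:catenoid-and-circle} at each interior axis point with $\delta=\delta(\eps)\to 0$ slowly, recover the genuine interfaces via Lemma~\ref{lem:phase-field-limsup}, force $u_\eps$ through a plateau at $0$ on the catenoid regions (with Modica--Mortola transitions to $\pm 1$ at their ends) so that $k_G(u_\eps)=0$ kills the Gauss-curvature contribution there, and finally restore the constraints and reparametrise. The only small imprecision is the scale of the artificial transition: the plateau $\{u_\eps=0\}$ sits in an interval of length $O(\eps)$, but the Modica--Mortola transitions at its ends have width $O(\sqrt{\eps})$ and energy $O(\sup y_\eps)$ rather than $O(\delta)$; since both quantities vanish under the slow coupling $\delta(\eps)\to 0$ this does not affect the conclusion.
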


\begin{proof}
  Let $\set{y=0} \cap I = \set{s_1,\ldots,s_n}$ where
  $n=N_\gamma-1$. We employ Lemma \ref{lem:catenoid-and-circle} and
  its mirrored version successively for each $s_k$, taking the global
  shifts in $x$-direction into account.  Since the parameter $\delta>0$
  in Lemma \ref{lem:catenoid-and-circle} is independent of $\eps$, we
  can choose it so small that the curve replacement, apart from
  $x$-shifts and the small perturbation in Lemma
  \ref{lem:catenoid-and-circle}, takes place in intervals $J_{k,\eps}$
  around $s_k$ of length at most $\eps$ and with area and phase
  constraint error bounded by $\sqrt{\eps}$. The result for
  sufficiently small $\eps$ is a sequence $(\gamma_\eps)$ that
  converges to $\gamma$ in $W^{1,p}(I; \bbR^2)$ for any $p \in
  [1,\infty)$ and satisfies
  \begin{equation}
    \label{eq:mc-int-conv}
    \int_{M_{\eps}} k(u) (H_\eps-H_{s}(u))^2 \,d\mu_\eps
    \to
    \int_{M_{\gamma}} k(u) (H-H_{s}(u))^2 \,d\mu
    \qquad\text{as }\eps\to0.
  \end{equation}

  We construct $u_\eps$ by first replacing $u$ around $S_u$ with the
  local recovery sequences from Section
  \ref{sec:local-interf-recov}. Additionally, we set $u_\eps=0$ in
  each $J_{k,\eps}$ and at $\partial J_{k,\eps}$ we make a transition
  to $\pm 1$ exactly as in Section \ref{sec:local-interf-recov}. The
  phase field energy of $J_{k,\eps}$ is
  \begin{equation*}
    2\pi \int_{J_{k,\eps}} \frac{1}{\eps} W(0) y_\eps \,d t
    \leq
    2\pi W(0) \sup_{J_{k,\eps}} y_\eps,
  \end{equation*}
  and the costs of the transitions near $\partial J_{k,\eps}$ are
  bounded by $2\pi \sigma \sup_{\partial J_{k,\eps}} y_\eps$ as shown
  in Lemma \ref{lem:phase-field-limsup}. By construction of
  $\gamma_\eps$ both vanish as $\eps\to0$. Moreover
  \begin{equation*}
    \int_{M_\eps} k_G(u_\eps) K_\eps \,d\mu_\eps
    =
    \int_{M_\eps(I \sm \cup J_{k,\eps})} k_G(u_\eps) K_\eps \,d\mu_\eps
    \to
    \int_{M_\gamma} k_G(u) K \,d\mu,
  \end{equation*}
  since $u_\eps \to u$ in $L^1(I)$, $K_\eps = K$ outside $\cup
  J_{k,\eps}$, and $\int_{M} |K| \,d\mu$ is finite. Also,
  \eqref{eq:mc-int-conv} still holds with $u$ replaced by $u_\eps$ on
  the left hand side. Hence, we find $\calE_\eps(\gamma_\eps,u_\eps)
  \to \calE(\gamma,u)$ as $\eps \to 0$.

  Finally, to obtain a membrane in $\calC_\eps \times \calP_\eps$ we
  once more have to correct the constraints. For the area this is done
  as in Lemma \ref{lem:approx-simple-curves}, while the error in the
  phase integral, which is introduced by the constructions at the axis
  of revolution, can be corrected as in Lemma
  \ref{lem:phase-field-limsup}, as it is of order $\sqrt{\eps}$. The
  result is a membrane $(\gamma_\eps,u_\eps)$ that satisfies all
  conditions of $\calC_\eps \times \calP_\eps$ except for the constant
  speed requirement. However, since by construction $|\gamma_\eps'| =
  1 + o(1)$ and the perturbation vanishes in $W^{1,2}$, the constant
  speed reparametrisations converge to the identity in $W^{2,2}(I)$
  and the properties of $(\gamma_\eps, u_\eps)$ carry over to
  reparametrised curve and phase field.
\end{proof}


\section{Some generalisations}
\label{sec:generalisations}

We conclude the paper with some extensions of
Theorem~\ref{thm:gamma-conv}. First of all, the proof is easily
adapted to non-symmetric potentials $W$. In this case, one considers
the complete optimal profile $p$ in Section
\ref{sec:local-interf-recov} and uses the appropriate side in the
connections to regions $\set{u_\eps=0}$ in Corollary
\ref{cor:curve-approx}. One may also consider potentials like $W(u) =
(1-u)^2$ and drop the phase integral constraint for $u_\eps$. Then
there is only one lipid phase, and $u_\eps$ is merely an auxiliary
variable that allows the recovery of topological changes at the axis
of revolution in the limit.


The constraint of prescribed area for the approximate setting can be
relaxed to
\begin{equation*}
  0
  <
  \inf_{\gamma \in \calC_\eps} \calA_\gamma
  \leq
  \sup_{\gamma \in \calC_\eps} \calA_\gamma
  <
  \infty,
\end{equation*}
and the arguments for equi-coercivity and lower bound still apply. It
can be incorporated as penalty term in the energy, for instance by
$\left( \calA_{\gamma} - A_0 \right)^2 / \eps$ or any other scale of
$\eps$, because we have recovered it exactly. In the same way, the
phase integral constraint can be replaced by a penalty term. Other
constraints that change continuously under the convergence proved in
Lemmas \ref{lem:coerc-curves} and \ref{lem:coerc-phase-fields} can
also be imposed, for instance on the enclosed volume $\calV_\gamma(M)
= \pi \int_{M_\gamma} x' y^2 \,d t$. Of course, constraints have to be
compatible, so that the set of admissible membranes is non-empty.


The arguments in Section \ref{sec:proof} also apply to open surfaces
of revolution generated by curves $\gamma=(x,y) \colon I \to \bbR
\times \bbR_{>0}$ with prescribed boundary conditions for $\gamma$ and
$\gamma'$ at $\partial I$. The curve length is then controlled by
energy, area, and boundary conditions due to
\begin{equation*}
  2\pi \calL_\gamma
  \leq
  \int_{M_\gamma} |H| \,d\mu + 2\pi \widetilde\phi y|_{\partial I},
\end{equation*}
where $\widetilde\phi$ is the tangent angle as in the proof of
Lemma~\ref{lem:surf:length-bound}. Note that the boundary condition
for $\gamma'$ at $\partial I$ is preserved as $\eps \to 0$ because
$y>0$ at $\partial I$. Furthermore, since $\|\widetilde\phi\|_\infty
\leq \pi/2$, it is also possible to weaken the boundary conditions to
requiring a uniform $L^\infty$-bound on $y$ at $\partial I$. Such a
bound can for instance be derived from uniformly bounded energy
$\calE_\eps + \calG$, where
\begin{equation*}
  \calG(\gamma)
  =
  \overline{\sigma} \int_{M_\gamma(\partial I)} d\calH^1
  =
  2\pi \overline{\sigma} \sum_{s \in \partial I} y(s)
\end{equation*}
with a constant line tension $\overline{\sigma}$. Since $\calG$ is
continuous with respect to curve convergence in $C^0$, its presence
does not influence the $\Gamma$-convergence. The limit energy
$\calE+\calG$ models open lipid membranes; see for instance
\cite{SaTaTaHo95,TuOu03,DuWa08} for experimental observations,
modelling and numerical simulations of single-phase open membranes,
respectively.


\section*{Acknowledgements}

The author thanks Barbara Niethammer and Michael Herrmann for the
countless number of helpful remarks and discussions about the results
presented here. This work was supported by the EPSRC Science and
Innovation award to the Oxford Centre for Nonlinear PDE
(EP/E035027/1).


\bibliography{paper}
\bibliographystyle{abbrv}

\end{document}